\documentclass[12pt]{amsart}

\usepackage{graphicx}
\usepackage[leqno]{amsmath}
\usepackage{amssymb}
\usepackage[all]{xy}
\usepackage{graphicx}
\usepackage[usenames,dvipsnames]{color}
\usepackage[normalem]{ulem}

\newtheorem{thm}{Theorem}[section]

\newtheorem{lemma}[thm]{Lemma}
\newtheorem{cor}[thm]{Corollary}
\newtheorem{prop}[thm]{Proposition}

\theoremstyle{definition}
\newtheorem{definition}[thm]{Definition}
\newtheorem{example}[thm]{Example}

\pdfoptionalwaysusepdfpagebox=5 

\date{\today}

\begin{document}

\title[Taut foliations]
{Taut foliations}

\author[Kazez]{William H.  Kazez}
\address{Department of Mathematics, University of Georgia, Athens, GA 30602}
\email{will@math.uga.edu}

\author[Roberts]{Rachel Roberts}
\address{Department of Mathematics, Washington University, St.  Louis, MO 63130}
\email{roberts@math.wustl.edu}

\keywords{codimension one foliation, flow, taut, smoothly taut, everywhere taut, volume preserving flow, phantom tori, contact structure, universally tight, weakly symplectically fillable, overtwisted, L space}

\thanks{This work was partially supported by grants from the Simons Foundation (\#244855 to William Kazez, \#317884 to Rachel Roberts)}

\subjclass[2000]{Primary 57M50}

\begin{abstract}
We describe notions of tautness that arise in the study of $C^0$ foliations, $C^{1,0}$ or smoother foliations, and in geometry.  We give examples to show that these notions are different, and discuss how these differences impact some classical foliation results.

We construct examples of smoothly taut $C^{\infty,0}$ foliations that can be $C^0$ approximated by both weakly symplectically fillable, universally tight contact structures and by overtwisted contact structures.
\end{abstract}

\maketitle

\section{Introduction}
In \cite{Sullivan1,Sullivan2}, Sullivan introduced multiple notions of {\sl tautness} of a foliation, and proved the equivalence of these notions for $C^2$ foliations.  The conceptually simplest of these was highlighted further by Gabai (cf, Definition~2.11 of \cite{G1}).  When $T\mathcal F$ is only $C^0$, Definition~2.11 of \cite{G1} admits two natural interpretations.  Moreover, the notions of tautness introduced by Sullivan are distinct when the criterion that $\mathcal F$ be $C^2$ is dropped.

In this paper we describe several notions of tautness that arise in the study of $C^0$ foliations, $C^{1,0}$ or smoother foliations, and in geometry.  We examine how these notions are related, and give examples in \S~\ref{Examples} to show that these notions are different.  

The implications of the differences between versions of tautness are clearest in geometry and contact topology.  We show in Proposition~\ref{no vp flow} that some smoothly taut foliations are not transverse to a volume preserving flow, whereas by Theorem~\ref{vp flow}, $C^{1,0}$ everywhere taut foliations are always transverse to such flows.

In Theorem~\ref{limit of both} we construct smoothly taut $C^{\infty,0}$ foliations that are $C^0$ approximated both by overtwisted contact structures and by everywhere taut foliations.  This contrasts sharply with Theorem~\ref{everwheretautimplies} which shows that $C^{1,0}$ everywhere taut foliations are $C^0$ approximated by, and only by, weakly symplectically fillable, universally tight contact structures.

Since topologically taut $C^0$ foliations are isotopic to everywhere taut $C^{\infty,0}$ foliations (Corollary~\ref{top approx}), the differences between the versions of tautness are unimportant when working with foliations up to topological conjugacy.  In particular, for example, an $L$-space does not admit a transversely orientable, topologically taut, $C^0$ foliation (Corollary~\ref{L space}).

This study was inspired by an observation of Vincent Colin, who pointed out to us that if $T\mathcal F$ is only continuous, then $T\mathcal F$ is not uniquely integrable and this impacts the existence of transversals. We thank Vincent for helpful conversations, and in particular for explaining his work, \cite{ColinFirmo}, where these issues arise in the search for a foliation approximated by a pair of contact structures.  We also thank the Banff International Research Station for their hospitality, which made these conversations possible.

\section{Definitions}

We begin by recalling the definition of foliation, paying careful attention to smoothness.

\begin{definition}\label{folndefn1} Let $M$ be a smooth 3-manifold with empty boundary.  Let $k$ be a non-negative integer or infinity.  A {\sl codimension one foliation}, $\mathcal F$, is a decomposition of $M$ into a disjoint union of connected surfaces, called the {\sl leaves} of $\mathcal F$, together with a collection of charts $U_i$ covering $M$, with $\phi_i:\mathbb R^2 \times \mathbb R \to U_i$ a homeomorphism, such that the preimage of each component of a leaf intersected with $U_i$ is a horizontal plane.  

The foliation $\mathcal F$ is $C^k$ if the charts $(U_i,\phi_i)$ can be chosen so that each $\phi_i$ is a $C^k$ diffeomorphism.  

The foliation $\mathcal F$ is $C^{k,0}$ if the charts $(U_i,\phi_i)$ can be chosen so that the restriction of each $\phi_i$ to a horizontal plane is a $C^k$ immersion and so that the tangent planes of the leaves vary continuously.
\end{definition}

Notice that $T\mathcal F$ exists and is continuous if and only if $\mathcal F$ is $C^{1,0}$.  Different amounts of transverse smoothness can also be specified by defining $C^{k,l}$ foliations, $k\ge l$.  For the purposes of this paper it is enough to know that $C^{k,1}$ foliations are necessarily $C^1$.  See Definition~2.1 of \cite{KR3} for the general definition.  

\begin{definition} \cite{KR2} \label{flowboxdefn} 
Let $\mathcal F$ be either a $C^k$ or $C^{k,0}$ foliation, and let $\Phi$ be a smooth transverse flow.  A {\sl flow box}, $F$, is an $(\mathcal F,\Phi)$ compatible closed chart, possibly with corners.  That is, it is a submanifold diffeomorphic to $D\times I$, where $D$ is either a closed $C^k$ disk or polygon (a closed disk with at least three corners), $\Phi$ intersects $F$ in the arcs $\{(x,y)\}\times I$, and each component of $D\times \partial I$ is embedded in a leaf of $\mathcal F$.  The components of $\mathcal F\cap F$ give a family of $C^k$ graphs over $D$.  
\end{definition}

A {\sl flow box decomposition} is a finite cover of $M$ by flow boxes so that their interiors are pairwise disjoint and intersections along boundaries satisfy certain conditions.  The constraints imposed on boundary intersections are not important for what follows, but can be found in Definition~3.1 of \cite{KR4}.  A flow box compatible isotopy is an isotopy which fixes setwise the flow boxes, and the cells of flow boxes, in a given flow box decomposition.  Given $(M,\mathcal F,\Phi)$, there is a flow box decomposition of $M$, and a flow box decomposition of a submanifold of $M$ always extends to a flow box decomposition of $M$ (Proposition~3.2 of \cite{KR4}).

There are two natural definitions of a {\sl transversal to $\mathcal F$.} 

\begin{definition} Let $\mathcal F$ be a $C^0$ foliation of a closed manifold.  A {\sl topological transversal} $\gamma$ is a curve which is {\sl topologically} transverse to $\mathcal F$; namely, no nondegenerate sub-arc of $\gamma$ isotopes relative to its endpoints into a leaf of $\mathcal F$.  
\end{definition}

\begin{definition} Let $\mathcal F$ be a $C^{1,0}$ foliation of a closed manifold.  A curve $c$ is {\sl smoothly transverse} to $\mathcal F$ if it is smooth and $T\gamma$ and $T\mathcal F$ span $TM$ at each point of $c$.  For brevity, such a curve is called a {\sl transversal} to $\mathcal F$.
\end{definition}

Notice that a smoothly embedded topological transversal is not necessarily a transversal.

\begin{lemma}\label{isotopetosmooth} 
Let $\mathcal F$ be a $C^1$ foliation of $M$, and let $\gamma$ be a topological transversal to $\mathcal F$ passing through a point $p\in M$.  There is an isotopy of $M$ relative to $p$ taking $\gamma$ to a transversal through topological transversals to $\mathcal F$.
\end{lemma}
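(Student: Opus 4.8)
The plan is to replace the naive idea of perturbing $\gamma$ in the transverse direction---which need not preserve topological transversality---by an isotopy that moves $\gamma$ \emph{within the leaves} of $\mathcal F$. The key observation is that a \emph{leaf-preserving} ambient isotopy $\Psi_u$ of $M$ automatically carries topological transversals to topological transversals: if a sub-arc of $\Psi_u(\gamma)$ isotoped relative to its endpoints into a leaf $L$, then applying the homeomorphism $\Psi_u^{-1}$ (which fixes every leaf setwise) would isotope the corresponding sub-arc of $\gamma$ into $L$, contradicting the hypothesis. Thus, once we arrange $\Psi_u$ to be generated by a time-dependent vector field tangent to $T\mathcal F$, every curve $\gamma_u=\Psi_u(\gamma)$ is a topological transversal for free, and the whole difficulty reduces to producing a leaf-tangent isotopy whose time-one image is smoothly transverse.

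First I would localize. Since $\mathcal F$ is $C^1$, hence $C^{1,0}$, I may cover the compact curve $\gamma$ by finitely many flow boxes $F_1,\dots,F_n$ as in Definition~\ref{flowboxdefn} (each carrying a local smooth transverse flow), ordered along $\gamma$, with $p$ in the interior of one of them. In a flow box $F\cong D\times I$ the leaves are $C^1$ graphs over $D$ and the flow is the vertical family $\{(x,y)\}\times I$, so each flow arc is smooth and transverse to $\mathcal F$. Within $F$ the leaves are linearly ordered, and topological transversality forces the leaf through $\gamma(s)$ to be \emph{strictly monotone} in $s$: if $\gamma(s_1)$ and $\gamma(s_2)$ lay on the same leaf, the sub-arc between them would have both endpoints on that leaf and, the box being a ball, would isotope relative to its endpoints into it. Hence $\gamma$ meets each leaf of $F$ at most once and crosses them monotonically.

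Using this monotonicity I would build a \emph{parallel} smooth transversal and slide $\gamma$ onto it. Concretely, I construct a smooth arc $c$, transverse to $\mathcal F$, with $c(s)$ lying on the same leaf as $\gamma(s)$ for every $s$, with $c=\gamma$ at the endpoints, and with $c(s_p)=p$; inside each flow box one may take $c$ to run along a single flow arc, the monotonicity above guaranteeing that $c$ can be chosen to cross exactly the leaves $\gamma$ crosses and in the same order. Because $\gamma(s)$ and $c(s)$ always lie on a common leaf, I join them by a short path in that leaf, depending continuously on $s$, and thicken this family---using isotopy extension performed \emph{inside the leaves} and patched across the $F_i$ by a partition of unity---to an ambient isotopy $\Psi_u$ generated by a field tangent to $T\mathcal F$. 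Tapering the family to fix $p$ is possible because $\gamma$ is topologically transverse, hence monotone, through $p$; the resulting $\Psi_u$ is a leaf-preserving ambient isotopy (a continuous family of homeomorphisms, smooth away from the finitely many points where $\gamma$ is tangent to a leaf) carrying $\gamma$ onto the transverse arc through $p$. Its image $\Psi_1(\gamma)=c$ is a smooth transversal, and by the first paragraph every $\gamma_u=\Psi_u(\gamma)$ is a topological transversal.

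The main obstacle is the global assembly in this last step: producing $c$ and the leaf-tangent isotopy coherently across the overlaps of the flow boxes, so that the outcome is globally smooth and transverse---in particular rounding the corners where consecutive flow arcs meet, which is harmless since the directions transverse to $\mathcal F$ on a fixed co-orientation side along $\gamma$ form an open convex cone. The local monotonicity supplied by topological transversality is exactly what makes the leaf-by-leaf sliding well defined; the remaining work is the careful bookkeeping needed to make the per-box slides agree on overlaps and to keep the generating field tangent to $T\mathcal F$ and continuous throughout.
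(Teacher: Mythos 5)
Your framing is genuinely different from the paper's and one part of it is a real improvement: the observation that a leaf-preserving ambient isotopy automatically carries topological transversals to topological transversals cleanly handles the ``through topological transversals'' clause, which the paper's own proof leaves implicit. The monotonicity of the plaque coordinate along $\gamma$ inside a flow box is also correct and is the right local input. However, there is a genuine gap, and it sits exactly where the $C^1$ hypothesis has to do its work. Your target curve $c$ is a concatenation of flow arcs, one per flow box, and you dispose of the junctions by saying the corners ``where consecutive flow arcs meet'' can be rounded because the transverse directions form an open convex cone. But consecutive flow arcs of a transverse flow do not meet: the arc in $F_i$ ends at some point $v_i$ and the arc in $F_{i+1}$ begins at some other point $w_{i+1}$ on a nearby plaque, and you must produce a smooth, positively transverse connector from $v_i$ to $w_{i+1}$ (equivalently, make the transition portion of $c$, which must travel leafwise from one flow line to the other while crossing only the plaques $\gamma$ crosses there, smoothly transverse). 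The convex-cone remark smooths a corner at a shared point; it does not produce the connector. Steering a transverse arc from a prescribed point to another prescribed point on a slightly higher plaque is exactly what can fail when $T\mathcal F$ is merely continuous, and it is the step the paper handles by picking a $C^1$ diffeomorphism straightening the flow box to a horizontally foliated subset of $\mathbb R^3$ and running a straight line from any bottom point to any top point.

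The reason this cannot be waved off as bookkeeping is that your argument, as written, never uses the $C^1$ hypothesis: flow boxes, flow arcs, plaque monotonicity, and leafwise isotopies all exist for $C^{1,0}$ (even $C^{\infty,0}$) foliations. If the argument closed up without that hypothesis it would prove the lemma for $C^{\infty,0}$ foliations, and the paper's Example~\ref{top not smooth} (the torus foliation $\mathcal T$, topologically but not smoothly taut) and Proposition~\ref{no vp flow} show this is false for closed transversals. So the proposal is salvageable, but only by inserting the straightening step at the junctions --- at which point the construction of $c$ reduces to the paper's proof, with your leaf-preserving isotopy grafted on top to control the intermediate curves.
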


\begin{proof} 
Pick a $C^1$ diffeomorphism taking a flow box to a horizontally foliated subset of $\mathbb R^3$.  Then, in coordinates, a straight line can connect any point of the lower boundary to any point of the upper boundary.  Since the straight line can be modified near the end points to be tangent to any non-horizontal vector, the segments can be glued to give a $C^1$ transverse curve in the manifold.  This curve can then be $C^1$ approximated with a smooth transverse curve.  
\end{proof}

Therefore, when $\mathcal F$ is $C^1$, there is a transversal through $p$ if and only if there is a topological transversal through $p$.  Moreover, when $\mathcal F$ is $C^1$, there is a transversal through every point if there is a transversal through every leaf.  However, even for $C^{\infty,0}$ foliations, this is not true (Proposition~\ref{no vp flow}), and hence Definition~2.11 of \cite{G1} gives rise to three distinct notions of tautness.

\begin{definition} A $C^0$ foliation $\mathcal F$ of a closed 3-manifold is {\sl topologically taut} if for every leaf $L$ of $\mathcal F$ there is a simple closed topological transversal to $\mathcal F$ that has nonempty intersection with $L$.
\end{definition}

\begin{definition} A $C^{1,0}$ foliation $\mathcal F$ is {\sl smoothly taut} if for every leaf $L$ of $\mathcal F$ there is a simple closed transversal to $\mathcal F$ that has nonempty intersection with $L$.
\end{definition}

\begin{definition} Let $\mathcal F$ be a $C^{1,0}$ foliation of a closed manifold.  The foliation $\mathcal F$ is {\sl everywhere taut}, or simply {\sl taut}, if for every point $p$ of $M$ there is a simple closed transversal to $\mathcal F$ that contains $p$.  
\end{definition}

 In the absence of sufficient smoothness, these three notions of tautness differ, and they are frequently confused in the literature.  Compare Theorem~2.1 of our paper \cite{KR2} with Theorem~\ref{vp flow} for one of many instances of this. 
In practice, everywhere taut is the most useful form of tautness.  Referring to everywhere taut foliations as taut allows most theorems to be stated without additional hypotheses.

In Section~\ref{Examples}, we give examples showing that the inclusions of foliations $$\{\mbox{everywhere taut}\}\subset \{\mbox{smoothly taut}\}\subset\{ \mbox{topologically taut}\}$$ are proper, even when restricting to $C^{\infty,0}$ foliations.

Restricting to $C^1$ foliations, these proper inclusions become equalities.

\begin{lemma}\label{C1} For $C^1$ foliations, topologically taut implies smoothly taut, and smoothly taut implies everywhere taut.
\end{lemma}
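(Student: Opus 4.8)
The plan is to treat the two implications separately, the first as a direct appeal to Lemma~\ref{isotopetosmooth} and the second by a connectedness argument carried out on each leaf. For ``topologically taut implies smoothly taut'', let $L$ be a leaf of the ($C^1$) foliation $\mathcal F$. Topological tautness supplies a simple closed topological transversal $\gamma$ meeting $L$, and I would fix a point $p\in\gamma\cap L$ and apply Lemma~\ref{isotopetosmooth} to obtain an isotopy of $M$ fixing $p$ and carrying $\gamma$ to a transversal $\gamma'$. Since an ambient isotopy carries an embedded closed curve to an embedded closed curve, and here fixes $p\in L$, the curve $\gamma'$ is a simple closed transversal meeting $L$. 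As $L$ was arbitrary, $\mathcal F$ is smoothly taut.

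For ``smoothly taut implies everywhere taut'' it suffices to establish the leaf-transport principle that if one point of a leaf $L$ lies on a simple closed transversal, then every point of $L$ does: smooth tautness furnishes a transversal through some point of each leaf, while everywhere tautness is precisely the assertion that every point of $M$ lies on a transversal. So I would fix a leaf $L$, set $G=\{p\in L:\ p\text{ lies on a simple closed transversal}\}$, and show that $G$ is nonempty, open, and closed in $L$; connectedness of the leaf (Definition~\ref{folndefn1}) then forces $G=L$. Nonemptiness is the hypothesis, so the content is the clopenness of $G$.

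The heart of the matter is a single local move, and this is the step I expect to require the most care. Given $p_0\in L$, I would work in a foliation chart of Definition~\ref{folndefn1} about $p_0$, in which the leaves appear as horizontal planes and any transversal appears as a curve transverse to those planes. For $p$ in a small neighborhood $V$ of $p_0$ in $L$ -- so that $p$ and $p_0$ lie on a common plaque -- I would construct an ambient isotopy, supported in the chart and tapered to the identity near its boundary by a bump function, that in local coordinates is a level-preserving horizontal translation carrying $p_0$ to $p$. The essential point, and the only place where transversality could be lost, is that this isotopy preserves $\mathcal F$: because it fixes the vertical coordinate it maps each horizontal plane, hence each leaf plaque, to itself, so it carries transversals to transversals. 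Being invertible and leaf-preserving, it sets up a correspondence between simple closed transversals through $p_0$ and those through $p$, whence $p_0\in G$ if and only if $p\in G$ for every $p\in V$; this simultaneously shows that $G$ and $L\setminus G$ are open.

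One technical caveat I would address is smoothness: since the chart is only $C^1$, the translated curve is a priori only a $C^1$ transversal, so after the move I would invoke Lemma~\ref{isotopetosmooth} once more (equivalently, a $C^1$-small smoothing relative to the marked point) to replace it by a genuine smooth simple closed transversal through $p$. With the local move in hand, $G$ is clopen and nonempty, hence $G=L$, and $\mathcal F$ is everywhere taut.
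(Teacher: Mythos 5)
Your proof is correct, but it is organized differently from the paper's. The paper proves both implications at once by going straight to the top of the chain: it invokes the classical fact (Lemma~\ref{top everywhere taut}, the $C^0$ statement that topological tautness yields a topological transversal through \emph{every point}) and then applies Lemma~\ref{isotopetosmooth} once to smooth that transversal, giving everywhere tautness directly; both stated implications then follow from the inclusions $\{\mbox{everywhere taut}\}\subset\{\mbox{smoothly taut}\}\subset\{\mbox{topologically taut}\}$. You instead prove the two implications separately, and for ``smoothly taut implies everywhere taut'' you carry out a self-contained clopen/leaf-transport argument: a leaf-preserving, level-preserving isotopy in a $C^1$ chart slides a transversal from one point of a plaque to any nearby point, and connectedness of the leaf does the rest. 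This is essentially a smooth-category re-proof of the classical transport lemma the paper cites (the same openness idea appears in the proof of Lemma~\ref{join}), and it is exactly where the $C^1$ hypothesis earns its keep --- your observation that the level-preserving move carries $T\mathcal F$ to $T\mathcal F$ and hence cannot destroy transversality is the point that fails for merely $C^{1,0}$ foliations, as the phantom-torus examples of Section~\ref{Examples} show. What each approach buys: the paper's is shorter and leans on the $C^0$ literature; yours is more self-contained, isolates the second implication as a genuine statement about smoothly taut $C^1$ foliations rather than deducing it from the topological one, and makes the regularity bookkeeping (the final $C^1$-small smoothing rel the marked point) explicit.
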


\begin{proof} Suppose $\mathcal F$ is topologically taut.  By Lemma~\ref{etautlemma}, there is a topological transversal through every point $p$ of $M$.  By Lemma~\ref{isotopetosmooth}, there is therefore a smoothly transverse transversal through every point.
\end{proof}

We include a fourth notion of tautness, introduced by Sullivan in \cite{Sullivan2}, since it motivates the usage of ``taut''.

\begin{definition} Let $\mathcal F$ be a $C^{k,0}$ foliation of a closed manifold with $k \ge 2$.  The foliation is {\sl geometrically taut} if there exists a metric on $M$ such that every leaf of $\mathcal F$ is a minimal surface.  An excellent reference for this material is the Appendix of \cite{Hass}.  
\end{definition}

\section{Standard foliation lemmas} \label{classicallemmas}

We begin by translating three classical results into the context of $C^0$ foliations.  In each case, the original proof translates immediately to yield the claimed result.  The proofs of each of the following three lemmas can be found, for example, in the proof of Proposition~4, Chapter VII, of \cite{CN}, and are similar.  For completeness, we illustrate the key ideas involved by including a proof of the first lemma.

\begin{lemma}\label{join} If $\mathcal F$ is a topologically taut $C^0$ foliation of $M$, then there is a connected closed topological transversal that intersects every leaf of $\mathcal F$.
\end{lemma}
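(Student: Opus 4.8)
The plan is to start from topological tautness, which gives a simple closed topological transversal $\gamma_L$ through each leaf $L$, and to assemble these local transversals into a single connected closed transversal meeting every leaf. The first step is a compactness reduction: since $M$ is closed and each transversal $\gamma_L$ meets an open saturated set of leaves (the leaves it passes through form an open set once we thicken $\gamma_L$ slightly inside a flow box decomposition), the collection of these open saturated sets covers $M$, and we extract a finite subcover. This yields finitely many simple closed topological transversals $\gamma_1,\dots,\gamma_n$ such that every leaf of $\mathcal F$ meets at least one $\gamma_i$. The goal is then to join these finitely many disjoint (or made disjoint) transversals into one connected transversal that still meets every leaf.

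The second step is the joining construction, which is the classical trick attributed to the proof of Proposition~4, Chapter VII of \cite{CN}. First I would arrange the $\gamma_i$ to be pairwise disjoint by small perturbation. Then I connect consecutive transversals by a path: pick a point $p_i\in\gamma_i$ and $p_{i+1}\in\gamma_{i+1}$, join them by an embedded arc $\alpha_i$ in $M$, and use the fact that $M$ is connected so such arcs exist. The subtlety is that $\alpha_i$ need not be transverse to $\mathcal F$, but one replaces $\alpha_i$ by a transverse arc running ``alongside'' the leaves: using a flow box decomposition and Definition~\ref{folndefn1}, any arc can be pushed to a topological transversal staying close to it, by threading it through the flow boxes it crosses so that inside each box it rises monotonically from the lower to the upper face. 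Once we have transverse connecting arcs $\alpha_i$, we form the concatenation $\gamma_1 \cup \alpha_1 \cup \gamma_2 \cup \cdots \cup \alpha_{n-1}\cup\gamma_n$ and traverse it as a single closed loop, doubling back along each $\alpha_i$ so that the arcs are traversed twice in opposite directions, yielding a connected closed curve that is still topologically transverse and meets every leaf met by some $\gamma_i$, hence every leaf.

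The third step is to repair the failure of embeddedness and transversality introduced by the doubling-back and by the junctions where arcs meet the circles. Near each junction point the concatenated curve has a corner and reverses direction, so it is neither embedded nor a topological transversal; I would smooth each such corner inside a single flow box, where Definition~\ref{folndefn1} lets me replace the corner by a short monotone transverse arc connecting the incoming and outgoing strands on the same side of the leaves, and then push the two parallel copies of each $\alpha_i$ apart to remove self-intersections. Since all modifications happen inside flow boxes and preserve the topological transversality condition (no subarc isotopes into a leaf), the result is a single connected, simple, closed topological transversal.

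The main obstacle is precisely this last step: maintaining topological transversality while achieving both connectedness and embeddedness simultaneously. Unlike the $C^1$ case, where Lemma~\ref{isotopetosmooth} lets us work with genuine transverse vectors and smooth everything freely, here transversality is only the topological condition that no subarc isotopes into a leaf, so the corner-smoothing and the removal of self-intersections must be done carefully inside flow boxes to ensure that the monotone-rising behavior is preserved at every junction. The key enabling fact is that inside any flow box the foliation is a trivial product, so local transverse arcs joining any point of the bottom face to any point of the top face exist and can be chosen embedded and disjoint; globalizing these local choices along a flow box decomposition is exactly what makes the joining work.
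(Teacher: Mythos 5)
There is a genuine gap in the joining step. Your plan is to connect $\gamma_i$ to $\gamma_{i+1}$ by an ambient transverse arc $\alpha_i$ and then traverse the concatenation ``doubling back along each $\alpha_i$ so that the arcs are traversed twice in opposite directions.'' A transverse arc is monotone with respect to the transverse orientation (it rises through the leaves), so traversing it and then retracing it backwards produces a turning point at which the curve goes up and immediately comes back down. A small subarc around that turning point has both endpoints on (essentially) the same leaf and lies in a single flow box, so it isotopes rel endpoints into a leaf: the resulting closed curve is \emph{not} a topological transversal. Your third step does not repair this, because no local modification at the corner can reconcile an incoming upward strand with an outgoing downward strand into a monotone arc --- the failure is a global direction reversal along the whole doubled arc, not a corner singularity. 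This is exactly the trap the classical argument is designed to avoid.

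The fix, which is what the paper does, is to never double back. Since $M$ is connected and the saturation of each closed transversal (the union of leaves it meets) is open, if the collection is disconnected then two distinct components must meet a \emph{common leaf} $L$, at points $p$ and $q$. Join $p$ to $q$ by an arc $\alpha$ \emph{inside the leaf} $L$, take a product neighborhood $D\times I$ of a disk $D\subset L$ containing $\alpha$, and reconnect by a half twist: the strand entering $L$ upward at $p$ exits upward near $q$, and the strand entering upward at $q$ exits upward near $p$, both reconnecting arcs being monotone in the $I$-direction. Transversality is preserved throughout, the two components merge into one, and the set of leaves met does not decrease; iterating reduces the number of components. Your first step (openness of the saturated sets plus compactness to get finitely many transversals covering all leaves) is fine and consistent with the paper; the missing idea is that connectedness of $M$ forces two components to share a leaf, which is precisely what makes a monotone reconnection possible and renders the ambient connecting arcs, and hence the doubling back, unnecessary.
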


\begin{proof} Let $\gamma$ be a collection of topological transversals that intersect every leaf of $\mathcal F$.  Suppose $\gamma$ is not connected.  Since $M$ is connected and the set of points on leaves that intersect a single closed transversal is an open set, there are distinct components of $\gamma$ that intersect a common leaf $L$ at points $p$ and $q$ respectively.  Let $\alpha$ be an arc in $L$ connecting $p$ and $q$, and let $D\subset L$ be a disk neighborhood of $\alpha$.  The foliation restricts to a product homeomorphic to $D \times I$ near $D$.  The two transversals can be replaced by a connected transversal by inserting a half twist in $D\times I$.  
\end{proof}

\begin{lemma} \label{top everywhere taut}
Suppose $\mathcal F$ is $C^0$ and topologically taut.  For every point $p$ in $M$, there is a topological transversal through $p$.\qed
\end{lemma}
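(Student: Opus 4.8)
The plan is to upgrade the leafwise conclusion of topological tautness to the desired pointwise statement by an open and closed (connectedness) argument on a single leaf, using a ``slide the crossing point along the leaf'' move. Observe first that a short transverse arc through $p$ is automatic inside any foliation chart of Definition~\ref{folndefn1}, so the genuine content, and the reason tautness is invoked, is to produce a \emph{closed} topological transversal through the prescribed point $p$. Let $L$ be the leaf containing $p$. By topological tautness there is a simple closed topological transversal meeting $L$, so the set
\[
S=\{\,x\in L: \text{some closed topological transversal to }\mathcal F\text{ passes through }x\,\}
\]
is nonempty. Since $L$ is connected, it suffices to prove that $S$ is both open and closed in $L$, for then $S=L\ni p$.

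Both openness and closedness come from one observation: an ambient isotopy of $M$ that carries each leaf of $\mathcal F$ to itself takes topological transversals to topological transversals, because the isotopy-rel-endpoints condition defining a topological transversal is preserved under such foliation-preserving homeomorphisms. To see that $S$ is open, let $x\in S$ with transversal $\delta$ through $x$, and choose a chart $U\cong\mathbb R^2\times\mathbb R$ around $x$ in which $L$ appears as the plate through $x$. For any $x'$ in that plate there is a leaf-preserving isotopy of $M$, supported in $U$, carrying $x$ to $x'$; applying it to $\delta$ produces a closed topological transversal through $x'$, so a whole neighborhood of $x$ in $L$ lies in $S$. To see that $S$ is closed, suppose $x$ is a limit of points $x_n\in S$; for $n$ large, $x_n$ lies in a chart around $x$, and the same leaf-preserving slide moves $x_n$ to $x$, carrying a transversal through $x_n$ to one through $x$, whence $x\in S$.

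The hard part is not the topology of this argument but the honest verification, in the merely $C^0$ setting, that the ingredients behave as stated. Concretely, one must confirm that the slide can be realized by a genuine ambient isotopy of the closed manifold $M$ that preserves $\mathcal F$ leafwise (take it supported in the chart and equal to the identity near $\partial U$), and that topological transversality --- which, precisely because $T\mathcal F$ need not be uniquely integrable when $\mathcal F$ is only $C^0$ (the point raised by Colin), must be checked as the global isotopy condition rather than via a transverse line field --- is indeed invariant under such isotopies. This is exactly the same $C^0$ bookkeeping that underlies the proof of Lemma~\ref{join}, now applied within a single chart rather than to the product region of a disk. If a \emph{simple} closed transversal is desired, the isotopy can be chosen with small support so that it does not create self-intersections.
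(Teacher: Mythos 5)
Your proof is correct and is essentially the argument the paper intends: the paper omits a proof here, deferring to Proposition~4, Chapter~VII of Camacho--Lins Neto and to the key observation in its proof of Lemma~\ref{join} that the set of points lying on a closed topological transversal is a union of plaques, which is exactly your slide-along-the-leaf, open-and-closed argument, with the foliation-preserving ambient isotopy correctly handling the $C^0$ definition of topological transversality. (A minor remark: your final worry about simplicity is unnecessary, since the new transversal is the image of a simple closed curve under a homeomorphism of $M$.)
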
 

\begin{lemma} \label{etautlemma} If $L$ is a noncompact leaf of a $C^0$ foliation, then there is a topological transversal that has nonempty intersection with $L$.  \qed 
\end{lemma}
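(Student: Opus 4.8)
The plan is to exploit the compactness of $M$ together with the noncompactness of $L$ to force $L$ to recur to a single foliation chart on two \emph{distinct} plaques, and then to trade the resulting in-leaf arc for a genuine closed topological transversal. The distinctness of the two levels is the whole point, and it is exactly what noncompactness will provide.

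First I would produce the recurrence. Since $L$ is noncompact, choose a sequence $\{x_n\}\subset L$ that eventually leaves every compact subset of $L$. As $M$ is compact, after passing to a subsequence we may assume $x_n\to y$ in $M$. Fix a foliation chart $U\cong \mathbb R^2\times\mathbb R$ about $y$ as in Definition~\ref{folndefn1}, with plaques the horizontal planes and with $t$ the $\mathbb R$-coordinate. For large $n$ the $x_n$ lie in $U$. Each plaque of $L\cap U$ is a disk, hence relatively compact in $L$, so only finitely many $x_n$ lie on any one plaque; as the $x_n$ leave every compact subset of $L$, they therefore meet infinitely many distinct plaques of $L\cap U$, occurring at distinct levels. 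In particular $L$ meets $U$ in plaques $P$ and $Q$ at distinct levels $t=a$ and $t=b$, and since a leaf is connected there is an embedded arc $\delta\subset L$ from a point $p\in P$ to a point $q\in Q$.

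Next I would build the transversal. The arc $\delta$ has a foliated product neighborhood $N\cong \delta\times(-\epsilon,\epsilon)$ (holonomy along the contractible arc $\delta$ is trivial), in which the leaves of $\mathcal F$ are the slices $\delta\times\{s\}$ with $\delta\times\{0\}=\delta\subset L$. The two ends of $N$ sit inside $U$, and there the coordinate $s$ is a strictly monotone function of the level $t$, equal to $a$ (resp. $b$) on $L$. Choosing $s$ to increase from $0$ at the $p$-end, let $\mu$ be the arc in $N$ obtained by letting $s$ increase monotonically from $0$ to a small $s_{\max}$ as one traverses $\delta$ from $p$ to $q$. Because $\mu$ crosses the slices $\delta\times\{s\}$ monotonically, no nondegenerate subarc of $\mu$ isotopes into a leaf, so $\mu$ is a topological transversal from $p$ (at level $a$) to a point $q^{*}$ near $q$ at level close to $b$, hence $\ne a$. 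Inside $U$, I then join $q^{*}$ back to $p$ by an arc $\nu$ strictly monotone in $t$; this is again a topological transversal, and the concatenation $\gamma=\mu*\nu$ is a simple closed curve meeting $L$ at $p$.

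The main obstacle is precisely the step the hypothesis is designed to overcome: trading the in-leaf arc $\delta$ for a transverse return. This is impossible without $a\ne b$, since a transverse arc crosses each local leaf at most once and so cannot begin and end on the same leaf while remaining monotone; it is exactly the gap $b-a$ produced by recurrence that leaves room to close $\gamma$ up transversally inside $U$. The remaining care is bookkeeping at the corners $p$ and $q^{*}$: one must choose the sign of the monotone profile of $\mu$ and the direction of $\nu$ so that $\gamma$ passes through each corner with strictly monotone level and no tangency. When $\mathcal F$ is transversely orientable the two ends of $N$ carry compatible level-orientations and this is automatic; in general the mismatch is removed by inserting a half twist in a product chart exactly as in the proof of Lemma~\ref{join}. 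This yields the desired simple closed topological transversal having nonempty intersection with $L$.
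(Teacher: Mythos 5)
Your argument is correct and is essentially the classical proof that the paper defers to (Proposition~4, Chapter~VII of \cite{CN}): recurrence of the noncompact leaf to two distinct plaques of a single chart, followed by the product-neighborhood and monotone-rerouting device that the paper illustrates in its proof of Lemma~\ref{join}. The only point to tidy is that the plaques of the charts of Definition~\ref{folndefn1} are open planes rather than disks, so one should first shrink to a compact flow box in order to justify that only finitely many of the $x_n$ lie on any single plaque.
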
 

A {\sl Reeb component} is a foliation of a solid torus whose boundary is a leaf and such that all other leaves are homeomorphic to planes.  See Example~\ref{phantomReeb}.  A foliation is {\sl Reebless} if it contains no Reeb component.  

For $\mathcal F$ a transversely oriented foliation, a {\sl dead end component} \cite{Thurstonthesis} $C$ of $\mathcal F$ is a connected submanifold of $M$ that is cobounded by a finite collection of torus leaves $T_1,...T_n$ of $\mathcal F$ so that, for one of the two choices of transverse orientation of $\mathcal F$, $C$ lies on the positive side of each $T_i$.  

The next result is well known and can be proved using ideas found in \cite{Goodman}.

\begin{prop} [Theorem~1, \cite{solodov}] \label{suegoodman}
A transversely oriented $C^0$ foliation $\mathcal F$ is topologically taut if and only if it contains no dead end components.
\end{prop}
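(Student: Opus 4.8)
The plan is to prove the two implications separately, treating the direction ``topologically taut $\Rightarrow$ no dead end component'' as the routine one and its converse as the substantive one. The whole argument rests on a monotonicity observation which I would record first: if $\mathcal F$ is transversely oriented and $\gamma$ is an oriented closed topological transversal, then $\gamma$ crosses every leaf it meets with the same sign. Indeed, inside a product flow box $D\times I$, with the transverse orientation pointing in the direction of increasing $I$-coordinate, the $I$-coordinate of an arc of $\gamma$ can have no interior local extremum: a short subarc around such an extremum would have both endpoints on a common leaf and would isotope rel endpoints into that leaf, contradicting topological transversality. Hence the crossing sign is locally constant, and since $\gamma$ is connected it is globally constant.

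Granting this, the direction ``topologically taut $\Rightarrow$ no dead end'' follows by contraposition. Suppose $C$ is a dead end cobounded by torus leaves $T_1,\dots,T_n$ with $C$ on the positive side of each $T_i$, and let $\gamma$ be any closed transversal. Orient $\gamma$ so that its crossings are positive; then every crossing of $\gamma$ with $\partial C=T_1\cup\cdots\cup T_n$ points into $C$, since $C$ lies on the positive side of each $T_i$. But the arcs of $\gamma$ cut off by $\partial C$ alternate between the inside and the outside of $C$, so a closed loop must enter and leave $C$ equally often; having only entries forces $\gamma\cap\partial C=\varnothing$. In particular no closed transversal meets the leaf $T_1$, so $\mathcal F$ is not topologically taut.

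For the converse I would again argue by contraposition, building a dead end component out of the ``non-taut'' part of $M$. Suppose $\mathcal F$ is not topologically taut. Since the set of points lying on leaves that meet a fixed closed transversal is open (the observation already used in Lemma~\ref{join}), the union $W$ of all leaves met by some closed transversal is open and saturated, and its complement $N=M\setminus W$ is a nonempty closed saturated set. By Lemma~\ref{etautlemma} every noncompact leaf is met by a closed transversal, so every leaf of $N$ is compact. I would then examine the frontier leaves of $N$, the compact leaves forming the boundary between $W$ and $N$, and show that they cobound a region $C$ that is a dead end: the transverse orientation along each such boundary leaf must point consistently into $N$, for otherwise a transversal emanating from $W$ could thread through $N$ and, by the monotonicity lemma together with the entry/exit bookkeeping above run in reverse, produce a closed transversal meeting a leaf of $N$.

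The main obstacle is this last analysis of the frontier leaves, which is exactly where the ideas of \cite{Goodman} enter. One must show that the relevant boundary leaves are tori and that the transverse orientation is coherent along them: a compact leaf all of whose nearby forward transversals fail to return has holonomy forcing it to be a torus, and Reeb stability propagates this torus structure across the frontier. Controlling simultaneously the topology of these frontier leaves and the consistency of the transverse orientation --- the ``positive side'' condition in the definition of a dead end component --- is the crux, and is the step for which I would lean most heavily on \cite{Goodman,solodov}.
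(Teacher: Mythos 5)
Your forward direction (topologically taut $\Rightarrow$ no dead end component) is complete and correct: the monotonicity of a topological transversal inside flow boxes, the resulting global coherence of crossing signs, and the entry/exit count along $\partial C$ constitute exactly the standard argument. Note, however, that the paper offers no proof of this proposition at all---it is attributed to Theorem~1 of \cite{solodov} with the remark that it ``can be proved using ideas found in \cite{Goodman}''---so there is no argument in the text to compare against, and the assessment below concerns your reverse direction on its own terms.

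That direction has a genuine gap, and it sits exactly where you flag it, but your sketch also points at the wrong construction. First, the dead end component is not the set $N$ of transversal-free leaves, nor is it ``cobounded by the frontier leaves of $N$'' in any direct sense: the standard construction fixes a single compact leaf $T$ admitting no closed transversal and takes $C$ to be the closure of the set of points reachable from $T$ by positive transverse paths (allowing leafwise travel). One must then show that no such path returns to $T$ (else it closes up into a transversal by the half-twist trick of Lemma~\ref{join}), that the frontier of $C$ consists of compact leaves, and that the transverse orientation points into $C$ along every one of them. The resulting $C$ typically contains many leaves of your set $W$---for instance a $T^2\times I$ dead end that is a product of compact tori near its middle, each of which meets a closed transversal lying in the interior of $C$---so no analysis of $N$ and its frontier alone can produce it. Second, the reason the boundary leaves are tori is not ``holonomy plus Reeb stability propagating a torus structure'': once $C$ is built, the inward-pointing transverse vector field on $C$ is nonvanishing, so $\chi(C)=0$ by Poincar\'e--Hopf and hence $\chi(\partial C)=0$; Reeb stability enters only to exclude sphere leaves (ruling out $S^2\times S^1$), after which every boundary component has $\chi\le 0$ and therefore $\chi=0$. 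These are precisely the steps you would need to import from \cite{Goodman} and \cite{solodov} to close the argument.
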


 These results are not true if topological transversal is replaced by smooth transversal.  One issue that arises is the existence of {\sl phantom Reeb components} and, more generally, {\sl phantom dead end components.} 

\section{Phantom leaves}\label{Examples}

In this section, we give examples that highlight some differences between $C^1$ foliations and foliations with only continuous tangent plane field.  We begin by producing examples of Reebless $C^{1,0}$ foliations, and even Reebless $C^{\infty,0}$ foliations, that contain the tangent plane field of a compressible torus.

\begin{definition} 
Let $\mathcal F$ be a $C^{1,0}$ foliation.  A {\sl phantom} surface of $\mathcal F$ is a $C^1$ embedded surface in $M$ that is not contained in a leaf of $\mathcal F$, but has tangent plane field contained in $T\mathcal F$.
\end{definition}

\begin{figure}[htbp] 
\centering
\includegraphics[width=3.75in]{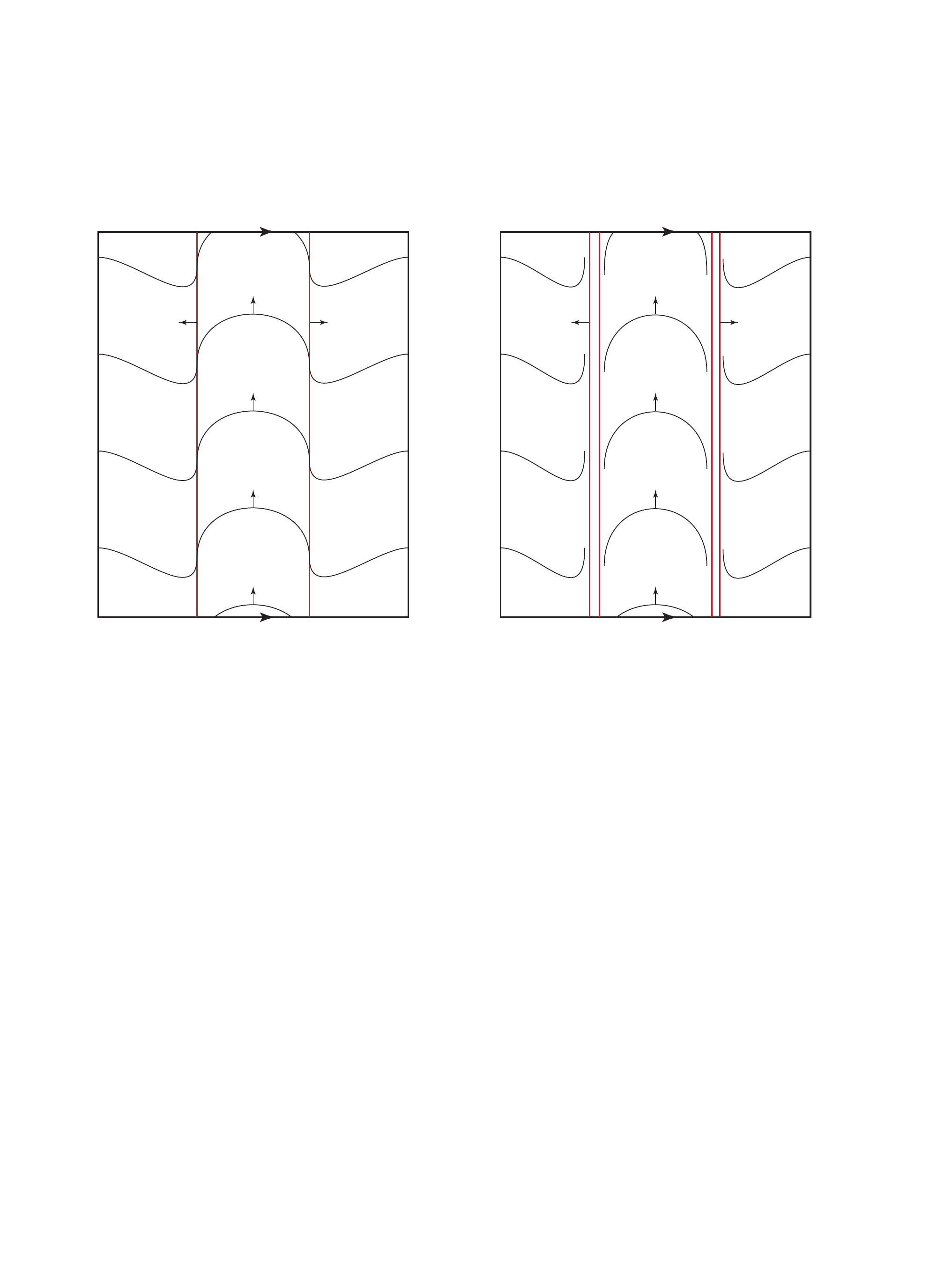} 
\caption{A Reeb-like annulus and a Reeb annulus with a collar on the compact leaves.}
\label{tangent torus}
\end{figure}

\begin{example}[Phantom Reeb component] \label{phantomReeb} Let $A = [-1,1] \times S^1$ be the annulus shown in Figure~\ref{tangent torus}, and let $\theta=0 \in S^1$.  Let $a_0$ be a smooth embedded arc in $A$ with boundary $\{\pm1\} \times \{0\}$ that is symmetric when the first coordinate is negated, is tangent to $ \{\pm 1/2\} \times S^1$ and is transverse to $\{x\} \times S^1$ for $x \neq \pm 1/2$.  As $\theta$ varies over $S^1$, let $a_\theta$ be the result of translating $a_0$ in the $S^1$ coordinate by $\theta$.  

The union of the $a_\theta$ is a $C^{\infty,0}$ transversely orientable foliation of $A$.  The curves $\{\pm 1/2\} \times S^1$ are integral curves for the foliation, and the transverse orientation may be chosen so that it points out of $S^1 \times[-1/2, 1/2]$.  Call the foliated annulus $[-1/2,1/2] \times S^1$ a {\sl phantom Reeb annulus}.  

Rotating $A$ about $\{0\} \times S^1$ produces a $C^{\infty, 0}$ Reeb-like foliation, called a {\sl phantom Reeb component}, $\mathcal R$ of $D^2 \times S^1$.  The torus, $T$, of radius $1/2$ about $\{0\} \times S^1$, is an example of a phantom torus.

Figure~\ref{tangent torus} also shows a foliation that when rotated about $\{0\} \times S^1$ is both $C^0$ close to $\mathcal R$ and has an actual Reeb component.  This foliation also has a product foliation by tori in a small neighborhood of the boundary of the Reeb component.
\end{example}

\begin{example} [$C^{\infty,0}$ and topologically taut, but not smoothly taut] \label{top not smooth}
Produce a foliation on $A = [-1,1] \times S^1$ first by letting $\{\pm 1\} \times S^1$ be leaves.  Let $\theta=0 \in S^1$, and let $b_0$ be a non-compact smooth arc that agrees with $a_0$ on $ [-1/2,1/2] \times S^1$, is transverse to $\{x\} \times S^1$ for $x \in (-1,-1/2) \cup (1/2,1)$, limits on $\{1\} \times S^1$ in the direction of increasing $\theta$, and limits on $\{-1\} \times S^1$ in the direction of decreasing $\theta$.  For other $\theta \in S^1$ let $b_\theta$ be given by translating $b_0$ through an angle of $\theta$ in the $S^1$ coordinate.  See Figure~\ref{topologically taut}.

Identifying each pair of points $( \pm 1, \theta)$ gives a $C^{\infty,0}$ transversely orientable foliation $\mathcal T$ of a torus.
\end{example}

\begin{figure}[htbp] 
\centering
\includegraphics[width=1.75in]{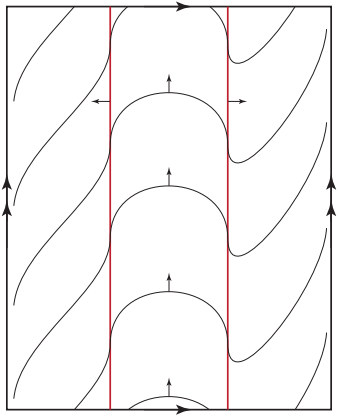} 
\caption{Topologically, but not smoothly, taut}
\label{topologically taut}
\end{figure}

\begin{prop} The foliation $\mathcal T$ is topologically taut, but not smooth\-ly taut.
\end{prop}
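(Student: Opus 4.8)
The plan is to analyze $\mathcal T$ leaf by leaf. The foliation has a single compact leaf $C$, the image of $\{\pm1\}\times S^1$, while every other leaf is a copy of $b_\theta$, a noncompact leaf that spirals onto $C$ from both sides and meets each phantom circle $P_\pm=\{\pm1/2\}\times S^1$ tangentially. As in Example~\ref{phantomReeb}, $P_+$ and $P_-$ are \emph{integral curves} of $T\mathcal F$ (everywhere tangent to $\mathcal F$) but are not leaves. The two assertions will be handled separately, and the contrast between them comes down to how a transversal is permitted to interact with the phantom circles.

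For topological tautness I would exhibit a single closed topological transversal meeting every leaf and invoke Lemma~\ref{join} (or simply note that meeting every leaf is the definition). The natural candidate is the ``vertical'' circle $V=\{\theta=0\}$ running once around the $[-1,1]$-direction. Since each spiral leaf $b_\theta$ realizes every value of the $S^1$-coordinate, $V$ meets every noncompact leaf, and it meets $C$ in the single glued point. The only points at which $V$ is tangent to $\mathcal F$ are the isolated points where $T\mathcal F$ is vertical; since the leaves through nearby points of $V$ are distinct noncompact leaves, no nondegenerate subarc of $V$ isotopes into a leaf, so $V$ is a topological transversal. (Alternatively, produce a topological transversal through $C$ directly and handle the noncompact leaves with Lemma~\ref{etautlemma}.) Either way every leaf is met, so $\mathcal T$ is topologically taut.

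For the failure of smooth tautness I would show that $C$ admits no smooth closed transversal, so smooth tautness already fails at $C$. Each noncompact leaf \emph{does} have a smooth transversal, namely any circle $\{x=c\}\times S^1$ with $c\notin\{\pm1/2,\pm1\}$, so $C$ is the only possible obstruction. The key point is that a smooth closed transversal $\gamma$ is everywhere \emph{positively} transverse: its velocity has a constant-sign component along the coorientation $\nu$ of $\mathcal F$. Because $P_+$ and $P_-$ are integral curves, $\gamma$ must cross each only in the $\nu$-direction, so all crossings of $\gamma$ with $P_+$ carry one sign, and likewise for $P_-$. The phantom Reeb structure forces $\nu$ to point consistently out of (or into) the central band, hence to have \emph{opposite} $x$-senses at $P_+$ and $P_-$. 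Computing the $x$-winding number $n=\gamma\cdot C=\gamma\cdot P_+=\gamma\cdot P_-$ (these circles are homologous) then yields a contradiction: the sign condition at $P_+$ forces $n$ to have one sign while that at $P_-$ forces the opposite, yet $\gamma$ meets $C$, so $n\neq0$. Thus no smooth closed transversal meets $C$, and $\mathcal T$ is not smoothly taut (contrast Lemma~\ref{C1}).

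The main obstacle is the bookkeeping that pins down $\nu$ at the two phantom circles and confirms the opposite-sign behavior driving the contradiction; this is where transverse orientability of $\mathcal T$ and the \emph{opposite} spiraling senses of $b_\theta$ at the two ends (toward $\{1\}\times S^1$ with increasing $\theta$, toward $\{-1\}\times S^1$ with decreasing $\theta$) must be used, since these are precisely what make the global coorientation consistent across $C$. The conceptual content, however, is clean and is exactly Colin's non-unique integrability: a smooth transversal is forbidden from crossing the integral curves $P_\pm$ against $\nu$, so it is trapped by them as by the boundary of a Reeb component, whereas a topological transversal such as $V$ may cross $P_\pm$ in either direction—running nearly along them if need be—because they are not leaves.
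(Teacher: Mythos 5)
Your overall strategy matches the paper's: the paper's proof is a two-line assertion that topological tautness is straightforward and that the compact leaf, being parallel to the Reeb-like annulus, meets no smooth closed transversal. Your second half is a correct and fully rigorous expansion of that one-liner: since $P_\pm$ are integral curves cooriented out of the band, every crossing of a smooth closed transversal $\gamma$ with $P_+$ has one sign and every crossing with $P_-$ the opposite sign, while $[C]=[P_+]=[P_-]$ in $H_1$, forcing $\gamma\cdot C=0$; but all intersections of $\gamma$ with the leaf $C$ carry the same sign, so $\gamma\cap C=\emptyset$. This is exactly the phantom-dead-end obstruction the paper is invoking, and it is sound.

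The weak point is your verification that $V=\{\theta=0\}$ is a topological transversal. First, the tangency claim is inverted: at the points of $P_\pm$ where $T\mathcal F$ is vertical (tangent to $\{\pm1/2\}\times S^1$), the horizontal circle $V$ is \emph{transverse} to $\mathcal F$; the places where $V$ could be tangent to $\mathcal F$ are where $T\mathcal F$ is horizontal. Second, and more substantively, ``the leaves through nearby points of $V$ are distinct'' does not imply that no nondegenerate subarc of $V$ isotopes into a leaf: a subarc can leave a leaf $b_\theta$ and return to the \emph{same} leaf at a later point, and if the two arcs cobound a bigon the subarc isotopes into $b_\theta$ rel endpoints. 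The correct reason no such bigon exists is that between any two returns of $V$ to the same noncompact leaf, the corresponding leaf arc winds a nonzero number of times in the $S^1$-direction (the leaves spiral monotonically in $\theta$ outside the band), so the loop formed is essential in the torus; and $V$ meets $C$ in a single point. Your fallback via Lemma~\ref{etautlemma} also does not close this hole, since that lemma covers only the noncompact leaves and the compact leaf $C$ is precisely the delicate one. Since the paper itself dismisses this half as straightforward, this is a repairable slip rather than a wrong approach, but as written the justification does not establish the definition it is meant to verify.
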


\begin{proof} It is straightforward to show that $\mathcal T$ is topologically taut.  Since the single compact leaf, $\{\pm 1\} \times S^1$ of $\mathcal T$ is parallel to the Reeb-like annulus, no smooth closed transversal can intersect it.
\end{proof}

Next we note that a transverse torus can often be used to create a phantom torus.  

\begin{prop} \label{phantomtori} 
Let $\mathcal F$ be a $C^{1,0}$ foliation of $M$.  Suppose there is a smoothly embedded torus $T$ in $M$ that is smoothly transverse to each leaf of $\mathcal F$, with $\mathcal F\cap T$ everywhere taut.
Then there is an isotopy of $M$, supported on a small neighborhood of $T$, that takes $\mathcal F$ to a $C^{1,0}$ foliation with phantom torus $T$.
\end{prop}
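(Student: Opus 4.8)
The plan is to localize the problem to a collar of $T$ and there reproduce, fiberwise, the kissing construction of Example~\ref{phantomReeb}. First I would choose a tubular neighborhood $N\cong T\times[-1,1]$ of $T=T\times\{0\}$, shrunk so that $\mathcal F$ is transverse to every torus $T\times\{t\}$; this is possible since transversality is an open condition and $T$ is transverse. Writing $T\mathcal F=\ker(dt-\alpha_t)$ for a continuous $t$-family of $1$-forms $\alpha_t$ on $T$ with $\alpha_0$ nowhere zero, the induced foliation is $\mathcal G=\mathcal F\cap T=\ker\alpha_0$, and at each $p\in T$ the plane $T_p\mathcal F$ meets $T_pT$ in the line $T_p\mathcal G$ while its remaining direction points out of $T$. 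Making $T$ tangent to $\mathcal F$ therefore amounts to rotating $T_p\mathcal F$ about the axis $T_p\mathcal G$ until the out-of-$T$ direction is swung into the direction in $T$ transverse to $\mathcal G$.

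To organize this rotation coherently over all of $T$ I would use the hypothesis that $\mathcal G$ is everywhere taut. Everywhere tautness provides a closed transversal to $\mathcal G$ through every point of $T$; since any curve in $T$ transverse to $\mathcal G$ is automatically transverse to $\mathcal F$ (its tangent lies in $TT$ but not in $TT\cap T\mathcal F=T\mathcal G$), these assemble into a foliation $\mathcal C$ of $T$ by closed curves transverse to $\mathcal G$, equivalently a fibration $T\to S^1$ whose fibers are transverse to $\mathcal G$. The neighborhood $N$ is then foliated by the annuli $A_c=c\times[-1,1]$, $c\in\mathcal C$, and on each such annulus $\mathcal F$ restricts, near $t=0$, to a $1$-dimensional foliation transverse to the circles $c\times\{t\}$ --- exactly the configuration of the Reeb-like annulus in Example~\ref{phantomReeb}. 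Performing the kiss of that example fiberwise over $\mathcal C$, I would insert an inflection in the normal coordinate so that every leaf becomes tangent to $c\times\{0\}$ there while still crossing from $t<0$ to $t>0$; carrying this out simultaneously for all $c$ and interpolating back to the identity near $t=\pm 1$ defines an ambient isotopy supported in $N$. Because each leaf is made tangent to the transverse direction of $\mathcal C$ at $t=0$, and already contains the $\mathcal G$-direction, its tangent plane there equals $T_pT$, so $T$ becomes tangent to the new foliation $\mathcal F'$ along all of $T$.

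Finally I would check that $T$ is a genuine phantom torus and that $\mathcal F'$ is $C^{1,0}$. As in Example~\ref{phantomReeb} the leaves only kiss $T$ --- each crosses from the negative to the positive side, tangent to $T$ along a leaf of $\mathcal G$ --- so no leaf is contained in $T$ and $T$ is not a leaf of $\mathcal F'$; tangency at every point was arranged in the previous step, giving the phantom torus. The slopes of the bent leaves vary across $t=0$ with the same $|t|^{2/3}$-type profile as in the model, so the tangent plane field stays continuous, and only continuous, there; this is precisely why $\mathcal F'$ is $C^{1,0}$, and not $C^1$, near $T$, in accordance with the non-unique integrability that makes phantom tori possible. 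The main obstacle is the global step: when $\mathcal G$ is not a foliation by parallel closed leaves (for instance an irrational or limit-cycle type foliation) one cannot straighten $\mathcal G$ to a product, and I must be careful that the fiberwise kisses fit together into a single well-defined ambient isotopy and that the resulting $T$ is an embedded torus with continuous tangent planes rather than an accumulation of spiraling leaves. This is exactly the point at which everywhere tautness of $\mathcal G$ is indispensable: the closed transversals furnish the coherent bending direction that both closes the construction up and keeps the leaves crossing $T$, so that $T$ remains tangent but not a leaf.
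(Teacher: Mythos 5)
Your proposal is correct and follows essentially the same route as the paper: use the everywhere tautness of $\mathcal F\cap T$ to obtain a transverse circle direction on $T$ (the paper isotopes $\mathcal F\cap T$ to be transverse to a product foliation $\{t\}\times S^1$, which is your fibration $T\to S^1$), put a product structure on a collar of $T$, and then apply a shear with a $y^{1/3}$-type vertical tangency in the transverse-circle direction so that the leaves kiss $T$ as in Example~\ref{phantomReeb}. Your fiberwise description over the annuli $c\times[-1,1]$ is, in product coordinates, exactly the paper's single global homeomorphism $(x,y,z)\mapsto(x,y,z+b(y))$, so the coherence worry you raise resolves itself once the product structure is fixed.
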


\begin{proof} 
Set $\lambda=\mathcal F\cap T$.  Since $\lambda$ is everywhere taut (as a foliation of $T$) it can be isotoped to be transverse to the smooth product foliation $\{t\}\times S^1$, for some choice of smooth product structure $ S^1\times S^1$ on $T$.  Hence, there is an isotopy of $M$, supported on a small neighborhood of $T$, and 
a smooth parametrization $(x,y,z)\in S^1\times [-1,1] \times S^1$ of a smaller neighborhood of $T$, so that $T$ is given by $y=0$, $\partial/\partial y$ is tangent to $\mathcal F$, $\mathcal F$ meets $S^1\times [-1,1] \times S^1$ in the product foliation $\lambda\times [-1,1]$, where $[-1,1]$ describes the range of the $y$-coordinate, and $\partial/\partial z$ is smoothly transverse to $\mathcal F$.

Let $b(y)$ be a damped version of $y^{1/3}$.  That is, it is $0$ away from $0$ and has a vertical tangency at $y=0$.  Now consider the homeomorphism $(x, y, z) \to (x, y, z + b(y))$.  It takes $\mathcal F$ to a new foliation tangent to $T$.
\end{proof}

Let $c$ be a transversal to a $C^{\infty,0}$ foliation $\mathcal F$.  Applying Proposition~\ref{phantomtori} to $\mathcal F$ and $T=\partial N(c)$, where $N(c)$ is a sufficiently small smooth regular neighborhood of $c$, yields examples of $C^{\infty,0}$ foliations of $M$ that are smoothly taut, but not everywhere taut.  

\begin{example} [$C^{\infty,0}$ and smoothly taut, but not everywhere taut] \label{phantomReebinsert} Let $\mathcal F$ be a smoothly taut, transversely oriented, $C^{\infty,0}$ foliation of $M$ and let $c$ be a smooth closed transverse curve.  Identify $N(c)$ and $\mathcal F|N(c)$ with $D^2 \times S^1$ foliated by disks.  Produce a $C^{\infty, 0}$ foliation $\mathcal H$ by replacing $\mathcal F$ on $D^2 \times S^1$ by $\mathcal R$.
\end{example}

\begin{prop}\label{no vp flow} The foliations of Example~\ref{phantomReebinsert} are smoothly taut, but they are not everywhere taut.  They admit no closed dominating 2-form, and they are not transverse to any volume preserving flow for any choice of Riemmanian metric.
\end{prop}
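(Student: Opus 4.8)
The plan is to isolate a single geometric object that forces every one of the four conclusions: the phantom torus $T$ sitting inside the inserted copy of $\mathcal R\cong D^2\times S^1$. Write $V$ for the radius-$1/2$ solid torus that $T$ bounds, so that $T=\partial V$ is null-homologous in $M$, $T$ is everywhere tangent to $\mathcal H$ (its tangent planes lie in $T\mathcal H$) but is not a leaf, and the transverse orientation of $\mathcal H$ points out of $V$ everywhere along $T$. All four statements will come from playing the tangency of $T$ against the fact that $T$ bounds.

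First I would dispose of smooth tautness, which is essentially built into the construction: the core circle $c=\{0\}\times S^1$ is still smoothly transverse to $\mathcal R$ and meets every leaf lying in $V$, exactly as the core of an honest Reeb component meets every interior leaf, while every remaining leaf of $\mathcal H$ agrees outside $N(c)$ with a leaf of the smoothly taut foliation $\mathcal F$ and so is met by a smooth closed transversal inherited from $\mathcal F$. For the failure of everywhere tautness I claim no smooth closed transversal meets $T$. Such a $\gamma$ is connected, so the sign of the pairing of $\dot\gamma$ with the co-orientation of $\mathcal H$ is constant along $\gamma$; assume (reversing orientation if needed) it is positive. Wherever $\gamma$ meets $T$ it crosses transversally, since there $T_p\mathcal H=T_pT$ while $\dot\gamma\notin T_p\mathcal H$, and because the co-orientation points out of $V$ each such crossing takes $\gamma$ from $V$ into its complement. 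Traversing the loop, every crossing thus exits $V$ and none enters it; but a loop must enter $V$ as often as it leaves, so there are no crossings at all, contradicting a transverse crossing at $p$. Hence $\gamma\cap T=\varnothing$ and $\mathcal H$ is not everywhere taut.

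The heart of the proposition is the nonexistence of a closed dominating $2$-form. Suppose $\omega$ were a closed $2$-form with $\omega|_{T\mathcal H}>0$. Since $T$ is everywhere tangent to $\mathcal H$, the restriction $\omega|_T$ is a positive area form for the orientation $T$ inherits from the co-oriented $\mathcal H$, so $\int_T\omega>0$ and in particular $\int_T\omega\neq 0$; but $T=\partial V$ and $d\omega=0$ give, by Stokes, $\int_T\omega=\int_V d\omega=0$, a contradiction. The statement about flows is then a corollary: were $\mathcal H$ transverse to a flow preserving a volume form $\mu$ (for instance the volume form of any Riemannian metric), with generating field $X$, then $\omega=\iota_X\mu$ would be closed, since $0=\mathcal L_X\mu=d\iota_X\mu$, and dominating, since $\omega(u,v)=\mu(X,u,v)\neq 0$ whenever $u,v$ span $T\mathcal H$ and $X$ is transverse to it. This contradicts the nonexistence of a closed dominating $2$-form, so no such flow exists for any metric.

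The conceptual content is short once one observes that the defining feature of a phantom torus — being tangent to $\mathcal H$ while bounding — is precisely an obstruction visible to Stokes' theorem, and that the co-orientation along $T$ turns the same torus into a one-sided barrier for smooth transversals. The step I expect to need genuine care is the smooth-tautness claim: verifying that the $\mathcal F$-transversals certifying tautness of the exterior leaves can be rerouted out of the small solid torus $V$, together with the bookkeeping that every leaf of $\mathcal H$ outside $V$ really does inherit such a transversal. By comparison the two geometric assertions, and the deduction of the non-tautness count for a closed curve, are routine.
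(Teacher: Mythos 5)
Your proposal is correct and follows essentially the same route as the paper: the separating phantom torus $T$ blocks any smooth closed transversal, Stokes' theorem applied to $T=\partial V$ rules out a closed dominating $2$-form, and contracting a volume form with the generating field of a transverse volume-preserving flow would produce such a form. You simply spell out the details (the sign-counting argument for transversals crossing $T$, and the verification of smooth tautness) that the paper leaves implicit.
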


\begin{proof} Since $T$ is a separating integral surface, no smooth closed trans\-versal can intersect a point of $T$.  The existence of a dominating 2-form would contradict Stokes theorem applied to $T$.  Since $T$ separates, a volume preserving flow transverse to $\mathcal H$ can not exist.
\end{proof} 

The construction of Example~\ref{phantomReebinsert} will be used in Corollary~\ref{limits of Reeb} to show that all foliations are, up to isotopy, limits of foliations with Reeb components.

Each of the preceding examples took advantage of a phantom Reeb component.  More generally, one can construct examples with one or more phantom dead end components.  

\begin{definition}
For $\mathcal F$ a transversely oriented foliation, a {\sl phantom dead end component} $C$ of $\mathcal F$ is a connected submanifold of $M$ that is cobounded by a finite collection of tori $T_1,...T_n$, where $T_1$ is a phantom leaf and each $T_j,j>2,$ is either a leaf or a phantom leaf for $\mathcal F$, so that, for one of the two choices of transverse orientation of $\mathcal F$, $C$ lies on the positive side of each $T_i$.  
\end{definition}

Notice that if $C$ is a phantom dead end component of $\mathcal F$, cobounded by tori $T_1,...,T_n$, and $T_j$ is a leaf of $\mathcal F$, then there is no transversal through $T_j$.

\begin{example} [Phantom dead ends obstruct smooth tautness] \label{examplelala} 
Let $\mathcal F$ be a topologically taut $C^{\infty,0}$ foliation containing a torus leaf $T_1$.  Suppose further that there is a torus $T_2$ isotopic to $T_1$ that meets $\mathcal F$ transversely in a $C^1$ foliation without Reeb annuli.  Proposition~\ref{phantomtori} can be applied to $\mathcal F$ and $T_2$ to obtain a $C^{\infty,0}$ foliation $\mathcal G$ so that $T_1$ is a leaf of $\mathcal G$, and $T_1$ and $T_2$ cobound a dead end component $T\times I$ of $\mathcal G$.  Since $\mathcal F$ is topologically taut, so is $\mathcal G$.  However, since there is no transversal through $T_1$, the foliation $\mathcal G$ is not smoothly taut.

There are many examples of foliated manifolds $(M,\mathcal F)$ satisfying these conditions.  For example, begin with a manifold $X$ with torus boundary and a topologically taut foliation $\mathcal F_0$ transverse to $\partial X$.  Let $\mathcal F_1$ be the foliation of $X$ obtained from $\mathcal F_0$ by adding as leaf $T_1=\partial X$, and modifying the leaves of $\mathcal F_0$ in a collar of $\partial X$ so that they spiral about $\partial X$.  Let $(M,\mathcal F)$ be the double of $(X,\mathcal F_1)$.
\end{example}

In fact, even with the hypotheses of smooth tautness, the usual argument (see Lemma~\ref{join}) for combining several smooth transversals produces a single curve that may be only topologically transverse.  This is illustrated by the following family of examples.

\begin{example} [$C^{\infty,0}$ and smoothly taut, but there is no transversal that has nonempty intersection with every leaf] Begin with a 3-manifold $X$ with torus boundary and smoothly taut foliation $\mathcal F_0$ transverse to $\partial X$.  Choose $(X,\mathcal F_0)$ so that $\mathcal F_0$ has minimal set disjoint from $\partial X$.  Now let $(M,\mathcal F)$ be the double of $(X,\mathcal F_1)$.  There are disjoint tori $T_1$ and $T_2$ in $M$ parallel to $\partial X$, and these can be chosen to lie transverse to $\mathcal F$.  Applying Proposition~\ref{phantomtori} to $\mathcal F$ and the tori $T_1$ and $T_2$ yields a $C^{\infty,0}$ foliation $\mathcal G$ that is smoothly taut and has phantom dead end component.  However, there is no connected transversal that has nonempty intersection with each leaf of $\mathcal G$.
\end{example}

\begin{example} The previous examples are built around the separating properties of certain phantom tori.  The same effects can be achieved without tangent tori.  Figure~\ref{crease} shows an alternate version of the first foliation described in Example~\ref{phantomReeb}.
\end{example}

\begin{figure}[htbp] 
\centering
\includegraphics[width=1.75in]{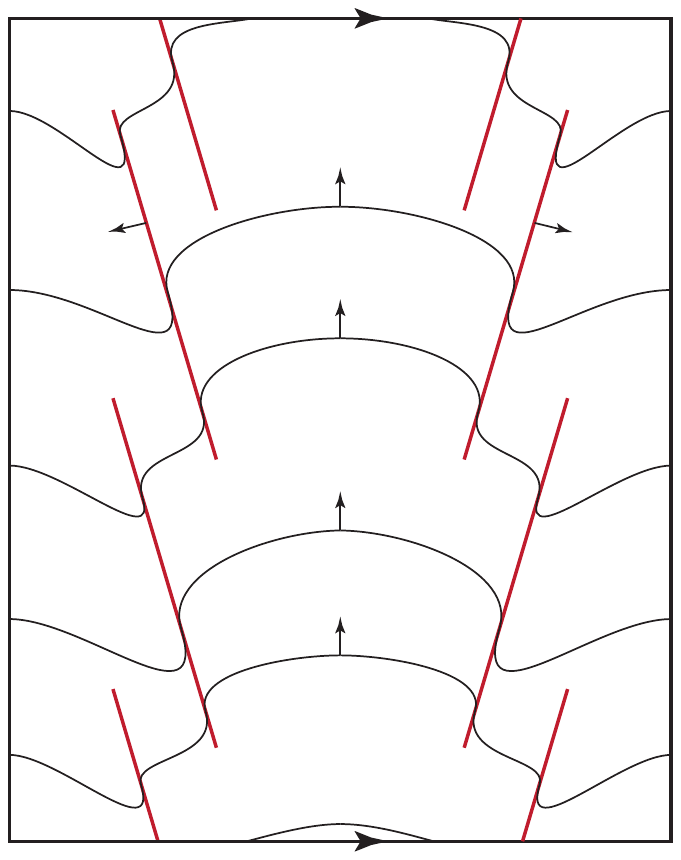} 
\caption{}
\label{crease}
\end{figure}

\section{Approximating foliations by foliations}
Next we show that topologically taut foliations can be $C^0$ approximated by isotopic smoothly taut foliations which can in turn be approximated by isotopic everywhere taut foliations.  Thus in seeking geometric applications of foliations, it is enough to prove the existence of a topologically taut foliation.

\begin{definition} A {\sl plaque} of a flow box $F$ is a connected component of a leaf of $\mathcal F$ intersected with $F$.  A {\sl plaque neighborhood in $F$} is a connected closed set with nonempty interior that is a union of plaques.  
\end{definition}

The next lemma is immediate by Proposition~3.10 of \cite{KR4}.

\begin{lemma}\label{create U} Suppose $\mathcal F$ is a $C^{\infty,0}$ foliation, and $U$ is a small open regular neighborhood of a finite collection of pairwise disjoint plaque neighborhoods.  There is a $C^0$ small isotopy of $M$ taking $\mathcal F$ to a $C^0$ close $C^{\infty,0}$ foliation that is smooth when restricted to $U$.  \qed
\end{lemma}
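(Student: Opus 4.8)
The plan is to work flow box by flow box and to smooth $\mathcal F$ only in the transverse (flow) direction, since the $C^{\infty,0}$ hypothesis already supplies smoothness along leaves. First I would fix a flow box decomposition adapted to the given data, so that each plaque neighborhood is a union of plaques of flow boxes and $U$ is built from such flow boxes together with collars; this is possible because a flow box decomposition of a submanifold extends to all of $M$. In a single flow box $F = D\times I$ with flow coordinate $z$, the plaques are $C^\infty$ graphs $\{z = g_t(x,y)\}$ indexed by a transverse parameter $t$. Disjointness of leaves forces $g_t(x,y)$ to be strictly increasing in $t$ at every $(x,y)$ simultaneously (graphs cannot cross), and the $C^{\infty,0}$ condition makes $g$ and its first $(x,y)$-derivatives jointly continuous while each $g_t$ is smooth in $(x,y)$. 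Thus $\mathcal F|_F$ is encoded by a single function $g(x,y,t)$, smooth in $(x,y)$ and merely continuous in $t$.

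Next I would smooth in $t$ by convolution: set $\tilde g(x,y,t)=\int g(x,y,s)\,\rho_\epsilon(t-s)\,ds$ for a nonnegative smooth mollifier $\rho_\epsilon$. Averaging against a positive kernel preserves monotonicity in $t$, so the graphs $\{z=\tilde g_t\}$ remain pairwise disjoint and still sweep out $F$; hence they are the plaques of a new foliation that is $C^0$-close to $\mathcal F$ (using uniform continuity of $g$ in $t$ on the compact flow box, close for $\epsilon$ small) and is now $C^\infty$ in the transverse variable $t$. To realize the change as an isotopy I would send the point of horizontal position $(x,y)$ on the leaf of parameter $t$ to the corresponding point of $\tilde g_t$, interpolating linearly: $(x,y,z,s)\mapsto(x,y,(1-s)z+s\,\tilde g_{\tau}(x,y))$, where $\tau=\tau(x,y,z)$ is the continuously varying parameter with $g_\tau(x,y)=z$. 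Each time slice is monotone in $z$, hence a homeomorphism, and the family is $C^0$-small. To confine the modification to $U$ I would damp the correction by a cutoff equal to $1$ on the plaque neighborhoods and $0$ outside $U$, interpolating across the collar; since the cutoff region is a union of plaques the cutoff can be chosen constant along leaves, so it does not destroy the graph or monotone structure.

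I expect the main obstacle to be purely analytic: verifying that the smoothed, cut-off foliation is \emph{jointly} $C^\infty$ on $U$ and remains $C^{\infty,0}$ across $\partial U$. The hypothesis only guarantees continuity of the \emph{first} derivatives of $g$, so one cannot simply differentiate under the integral at all orders, and one must argue that convolution in $t$, together with the leafwise-constant cutoff, produces continuous higher mixed derivatives on $U$ while matching the unperturbed continuous tangent planes at the boundary of $U$. This verification — local smoothing on a neighborhood of plaques by a $C^0$-small, compactly supported isotopy — is precisely the content of Proposition~3.10 of \cite{KR4}, from which the lemma follows immediately.
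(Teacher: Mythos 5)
Your proposal lands in the same place as the paper: the paper gives no argument at all, stating only that the lemma is immediate from Proposition~3.10 of \cite{KR4}, which is exactly the citation you invoke at the end. Your mollification-in-the-transverse-direction sketch is a reasonable gloss on what that proposition accomplishes, and you correctly flag the cutoff/joint-smoothness verification as the substantive content being outsourced, so the approach is essentially the same.
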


\begin{lemma}\label{smoothly connect} Let $F=D^2 \times I$ be a flow box for $(\mathcal F, \Phi)$.  Suppose $\mathcal F$ is smooth on a plaque neighborhood $U$ of $F$.  If $p \in D^2 \times \{0\}$ and $q \in D^2 \times \{1\}$, then there exists a transversely smooth arc $\alpha$ agreeing with $\Phi$ in a neighborhood of $p$ and $q$.  
\end{lemma}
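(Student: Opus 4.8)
The strategy is to build $\alpha$ from three pieces: a short initial and final piece that follow orbits of $\Phi$ near $p$ and $q$ (where $\mathcal{F}$ need not be smooth), and a middle piece built inside the region $U$, where $\mathcal{F}$ is genuinely smooth and the $C^1$ techniques of Lemma~\ref{isotopetosmooth} apply. First I would normalize the flow box coordinates so that the orbits of $\Phi$ meeting $F$ are the vertical segments $\{a\}\times I$ ($a\in D^2$) and each plaque of $\mathcal{F}$ is a $C^\infty$ graph $t=f(a)$ over $D^2$.

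Since distinct plaques are disjoint graphs over the connected disk $D^2$, they are totally ordered by height, so a connected union of plaques with nonempty interior, in particular $U$, has the form $\{(a,t):g_0(a)\le t\le g_1(a)\}$ for plaque graphs $g_0<g_1$. Consequently $U$ meets every vertical orbit in a nondegenerate segment. Writing $p=(a_0,0)$ and $q=(a_1,1)$, the orbits of $\Phi$ through $p$ and through $q$ each pass through the interior of $U$, and I would fix interior points $u_0=(a_0,s_0)$ and $u_1=(a_1,s_1)$ on these orbits, with $s_0$ just above $g_0(a_0)$ and $s_1$ just below $g_1(a_1)$, so that the plaque through $u_0$ lies strictly below that through $u_1$.

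Because $\mathcal{F}|_U$ is smooth, hence $C^1$, I would apply the construction in the proof of Lemma~\ref{isotopetosmooth} inside $U$ to join $u_0$ to $u_1$ by a smooth transversal $\beta\subset U$, arranging, as that construction permits, that $\beta$ coincides with the $\Phi$-orbit through $u_0$ near $u_0$ and with the $\Phi$-orbit through $u_1$ near $u_1$. Then $\alpha$ is the concatenation of the $\Phi$-orbit from $p$ up through $u_0$, the arc $\beta$, and the $\Phi$-orbit from $u_1$ up to $q$. Since $\beta$ agrees with $\Phi$ near $u_0$ and $u_1$, the three pieces fit together into a single $C^\infty$ curve, each piece is transverse to $\mathcal{F}$, and keeping the whole curve monotone in the transverse direction makes $\alpha$ embedded. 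By construction $\alpha$ agrees with $\Phi$ on neighborhoods of $p$ and $q$.

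The essential point, and the reason the hypothesis that $\mathcal{F}$ be smooth on $U$ cannot be dropped, is the middle step. When $\mathcal{F}$ is only $C^{\infty,0}$ the plane field $T\mathcal{F}$ is merely continuous, and two points generally cannot be joined by a smooth transversal; this is exactly the obstruction responsible for phantom components and for the failure of topological tautness to imply smooth tautness. The content of the lemma is that the transition between the two vertical orbits can be pushed into $U$, where smoothness is available and Lemma~\ref{isotopetosmooth} applies, while the flow $\Phi$ handles the two ends, where $\mathcal{F}$ may fail to be smooth but transversality of $\Phi$ is automatic. The remaining work, matching tangent directions at $u_0$ and $u_1$ and rounding to a smooth embedded curve, is routine and local to the smooth region, so I do not expect difficulty there.
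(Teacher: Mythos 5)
Your proposal is correct and follows essentially the same route as the paper: run flow segments of $\Phi$ from $p$ and from $q$ into the smooth region $U$, landing on distinct plaques ordered so the whole curve is monotone, and join the two endpoints by a smooth transverse arc inside $U$ where the $C^1$ argument of Lemma~\ref{isotopetosmooth} applies. The paper's proof is a three-sentence version of exactly this; your extra care about the structure of plaque neighborhoods and the tangency matching at $u_0$, $u_1$ is consistent with, and fills in, what the paper leaves implicit.
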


\begin{proof} Choose a segment of $\Phi$ that starts at $p$ and ends at point $u_1 \in U$.  Pick another segment that starts at $q$ and ends at $u_2 \in U$ where $u_2$ lies in a plaque just above the plaque containing $u_1$.  These segments can be combined with a transverse arc connecting $u_1$ and $u_2$ to produce the desired smooth arc $\alpha$.  
\end{proof}

\begin{thm}\label{top approx by smooth} Given a topologically taut $C^{1,0}$ foliation $\mathcal F$, there is a $C^0$ small isotopy of $M$ taking $\mathcal F$ to a $C^0$ close, smoothly taut, $C^{\infty, 0}$ foliation.
\end{thm}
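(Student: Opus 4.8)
The plan is to upgrade the topological transversal produced by Lemma~\ref{join} to a genuine smooth transversal, after first making $\mathcal F$ smooth along it via Lemma~\ref{create U} and then straightening it flow box by flow box via Lemma~\ref{smoothly connect}. As a preliminary reduction, I would replace $\mathcal F$ by a $C^0$-close, isotopic $C^{\infty,0}$ foliation: the leaves of a $C^{1,0}$ foliation are $C^1$ surfaces varying continuously, and the standard leafwise smoothing compatible with the continuous plane field $T\mathcal F$ produces such a foliation. Since topological tautness depends only on the underlying topological foliation, it is preserved, and it suffices to treat the case that $\mathcal F$ is $C^{\infty,0}$.

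Assume now that $\mathcal F$ is $C^{\infty,0}$ and topologically taut. Fix a smooth flow $\Phi$ transverse to $\mathcal F$ (one exists because transversality to the continuous plane field $T\mathcal F$ is an open condition) together with a flow box decomposition of $(M,\mathcal F,\Phi)$. By Lemma~\ref{join} there is a connected closed topological transversal $\gamma$ meeting every leaf. Using the co-orientation carried by $\Phi$, I would isotope $\gamma$ through topological transversals into a position where it meets the decomposition in a cyclic sequence of sub-arcs $\gamma_1,\dots,\gamma_k$, with $\gamma_i$ running monotonically from the bottom face to the top face of a flow box $F_i$ and with the exit point of $\gamma_i$ equal to the entry point of $\gamma_{i+1}$ (indices modulo $k$). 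In this position $\gamma_i$ crosses every plaque of $F_i$, and since $\gamma$ meets every leaf, every leaf of $\mathcal F$ passes through at least one $F_i$.

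Choose a small plaque neighborhood $U_i$ in the interior of each $F_i$, pairwise disjoint, and apply Lemma~\ref{create U} to a regular neighborhood of $\bigcup_i U_i$. This is the required $C^0$-small isotopy; it is flow box compatible, carries $\mathcal F$ to a $C^0$-close $C^{\infty,0}$ foliation that is genuinely smooth on each $U_i$, and keeps $\gamma$ (carried along by the isotopy) crossing each $F_i$ from bottom to top. In each $F_i$, Lemma~\ref{smoothly connect} then produces a smooth arc $\alpha_i$ from the entry point of $\gamma_i$ to its exit point: it consists of a $\Phi$-segment up from the bottom, a short smooth transverse jump between adjacent plaques inside $U_i$, and a $\Phi$-segment up to the top, and it agrees with $\Phi$ near both endpoints. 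Because consecutive endpoints coincide and both adjacent arcs equal $\Phi$ there, the concatenation $c=\alpha_1\cdots\alpha_k$ is a smooth closed curve transverse to $\mathcal F$. Each $\alpha_i$ runs monotonically from the bottom to the top of $F_i$ and hence crosses every plaque of $F_i$; therefore $c$ meets every leaf that passes through some $F_i$, that is, every leaf. This exhibits $\mathcal F$ as smoothly taut.

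I expect the main obstacle to be the middle step: placing the single topological transversal $\gamma$ into monotone, bottom-to-top position with respect to the flow box decomposition, and keeping careful track of leaf coverage so that the smoothed curve $c$ provably meets every leaf. Once $\gamma$ is in monotone position the rest is essentially bookkeeping, since Lemma~\ref{create U} supplies the smoothing isotopy and the clause of Lemma~\ref{smoothly connect} guaranteeing that each $\alpha_i$ agrees with $\Phi$ near its endpoints is exactly what makes the local arcs assemble into a globally smooth closed transversal.
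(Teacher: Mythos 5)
Your proposal is correct and follows essentially the same route as the paper: cover the topological transversal by flow boxes that it crosses monotonically from bottom to top, smooth the foliation on plaque neighborhoods via Lemma~\ref{create U}, and replace the transversal one flow box at a time via Lemma~\ref{smoothly connect}. Your only additions are the explicit preliminary reduction to a $C^{\infty,0}$ foliation (which the paper leaves implicit but which is needed to invoke Lemma~\ref{create U}) and more careful bookkeeping of how the arcs $\alpha_i$ assemble into a single smooth closed transversal meeting every leaf.
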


\begin{proof} Let $\gamma$ be a collection of topological transversals that intersect every leaf of $\mathcal F$.  Choose a collection of flow boxes that cover $\gamma$, that intersect only along their horizontal boundaries, and that have vertical boundaries disjoint from $\gamma$.  Extend these to a flow box decomposition $\mathcal B$ of $M$.

Choose a plaque neighborhood in each flow box of $\mathcal B$, and let $U$ be a small regular neighborhood of their union.  Apply Lemma~\ref{create U} to obtain an isotopy taking $\mathcal F$ to a foliation smooth on $U$, and denote the image of $\gamma$ under this isotopy by $\gamma'$.  Lemma~\ref{smoothly connect} can be used to replace $\gamma'$ with a transversely smooth curve $c$ one flow box at a time.
\end{proof}

\begin{lemma}\label{everywhere} Let $\mathcal B$ be a flow box decomposition for an topologically taut $C^{1,0}$ foliation $\mathcal F$.  There is a $\mathcal B$ compatible isotopy taking $\mathcal F$ to a $C^0$ close $C^{\infty,0}$ foliation $\mathcal G$ for which there exists a connected closed transversely smooth curve $c$ such that $c$ intersects every plaque of every flow box.
\end{lemma}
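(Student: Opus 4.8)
The plan is to reduce the statement to a combinatorial routing problem and then solve it with the two smoothing lemmas already in hand. First I would record the key local observation: in a flow box $F=D^2\times I$, the plaques are exactly the $C^k$ graphs over $D^2$ comprising $\mathcal F\cap F$, and any arc that is transverse to $\mathcal F$ and runs from the lower face $D^2\times\{0\}$ to the upper face $D^2\times\{1\}$ must cross every plaque of $F$, since transversality forbids the arc from reversing its co-oriented direction and hence forces it to meet each intermediate graph. Thus it suffices to produce a connected closed transversely smooth curve $c$ that, for each flow box of $\mathcal B$, contains a subarc running from its lower face to its upper face.

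To set up the smoothing, I would choose one plaque neighborhood $P_i$ in each flow box $F_i$ of $\mathcal B$ and let $U$ be a small regular neighborhood of $\bigcup_i P_i$. Applying Lemma~\ref{create U} yields a $C^0$ small isotopy carrying $\mathcal F$ to a $C^0$ close $C^{\infty,0}$ foliation $\mathcal G$ that is smooth on $U$; since the plaque neighborhoods lie in the interiors of the flow boxes, this isotopy can be taken supported near $U$ and hence $\mathcal B$ compatible. With $\mathcal G$ smooth on a plaque neighborhood of each $F_i$, Lemma~\ref{smoothly connect} applies inside every flow box: given any $p\in D^2\times\{0\}$ and $q\in D^2\times\{1\}$, there is a transversely smooth arc from $p$ to $q$, agreeing with $\Phi$ near its endpoints and, by the observation above, crossing every plaque of that box.

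Next I would build, for each flow box $F_i$, a closed transversely smooth curve $c_i$ that traverses $F_i$ from bottom to top. Starting from a Lemma~\ref{smoothly connect} arc across $F_i$, I continue the curve one flow box at a time, following the co-orientation and repeatedly invoking Lemma~\ref{smoothly connect} to pass smoothly through each successive box (the arcs agree with $\Phi$ near their endpoints, so they splice together along the shared horizontal faces), until the curve returns to the lower face of $F_i$ and closes up. Because $\mathcal G$ is topologically taut, there is a closed topological transversal through a point of $P_i$ (Lemma~\ref{top everywhere taut}), and I would use such a transversal as a combinatorial guide for which sequence of flow boxes to traverse, so that the construction is guaranteed to return to its starting face. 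Finally, the finitely many loops $c_1,\dots,c_n$ all meet the common smooth region $U$, so they can be amalgamated into a single connected closed transversely smooth curve $c$ by the half-twist trick of Lemma~\ref{join}, performed in smooth product regions of $U$ and chosen so as not to disturb the bottom-to-top subarcs; the resulting $c$ still crosses every plaque of every flow box.

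The main obstacle is the closing-up step in the per-box construction. Since $\mathcal G$ is only $C^{\infty,0}$, I cannot smooth a topological transversal directly (Lemma~\ref{isotopetosmooth} requires $C^1$), so the smooth closed transversal through each flow box must be assembled by hand from Lemma~\ref{smoothly connect} arcs. The delicate point is ensuring that following the co-orientation through the glued horizontal faces actually returns to the starting face rather than wandering indefinitely; this is exactly where topological tautness enters, via the guiding closed topological transversal, and where the matching of the flow direction at each gluing must be arranged with care.
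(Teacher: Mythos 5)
Your overall architecture (smooth plaque neighborhoods via Lemma~\ref{create U}, crossing each box bottom-to-top, amalgamating loops with half twists) matches the paper's, but the step you yourself flag as delicate --- closing up the bottom-to-top arc in each flow box into a smooth closed transversal --- is exactly the content of the lemma, and your proposal does not actually supply it. ``Following the co-orientation one flow box at a time'' with a closed topological transversal as a ``combinatorial guide'' is not yet an argument: a closed topological transversal through a point of $P_i$ need not traverse the flow boxes of $\mathcal B$ monotonically from bottom face to top face (it can enter and leave a box through vertical boundary, or through the same horizontal face), so there is no canonical ``sequence of flow boxes to traverse,'' and Lemma~\ref{smoothly connect} only splices arcs across boxes whose horizontal faces are matched up and which already carry smooth plaque neighborhoods. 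Moreover, to close up you need a \emph{positively} transverse smooth return path from the top face of $F_i$ back to your chosen starting point $p$ on the bottom face, approaching $p$ from below and agreeing with $\Phi$ there; the existence of such a path through two \emph{prescribed} points is not what Lemma~\ref{top everywhere taut} gives you.

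The paper circumvents this by reversing the order of construction. It first builds a single connected closed \emph{smooth} transversal $c$ meeting a smooth plaque neighborhood $U_i$ in every flow box (by joining closed topological transversals through the $U_i$ as in Lemma~\ref{join} and then smoothing via the method of Theorem~\ref{top approx by smooth}). Then, for a point $p$ in a flow box $F$, it takes the flow segment $\Phi_p$ through $p$, extended until it ends in plaque neighborhoods $U_i$ above and $U_j$ below $F$ (this segment crosses every plaque of $F$ automatically, since plaques are graphs over $D^2$ and flow lines are the vertical fibers), and closes it up with a small push-off $\tau$ of the segment of $c$ running from $U_i$ to $U_j$. Because $c$ is connected and visits every $U_i$, the return path $\tau$ always exists, and the junctions happen inside $U_i\cup U_j$ where the foliation is smooth. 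The resulting $c_p$ is then joined to $c$ by a half twist; note that $c_p$ and $c$ are parallel along $\tau$, so this join also takes place in a smooth region --- which addresses a second soft spot in your amalgamation step, since your loops $c_1,\dots,c_n$ meet \emph{different} components of $U$ and you would still need to locate each half-twist junction on a common leaf inside a smooth product region. So: right skeleton, but the mechanism that guarantees closure (pre-building the connected transversal $c$ and using its segments as return paths) is missing and needs to be supplied.
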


\begin{figure}[h] 
\centering
\includegraphics[width=3in]{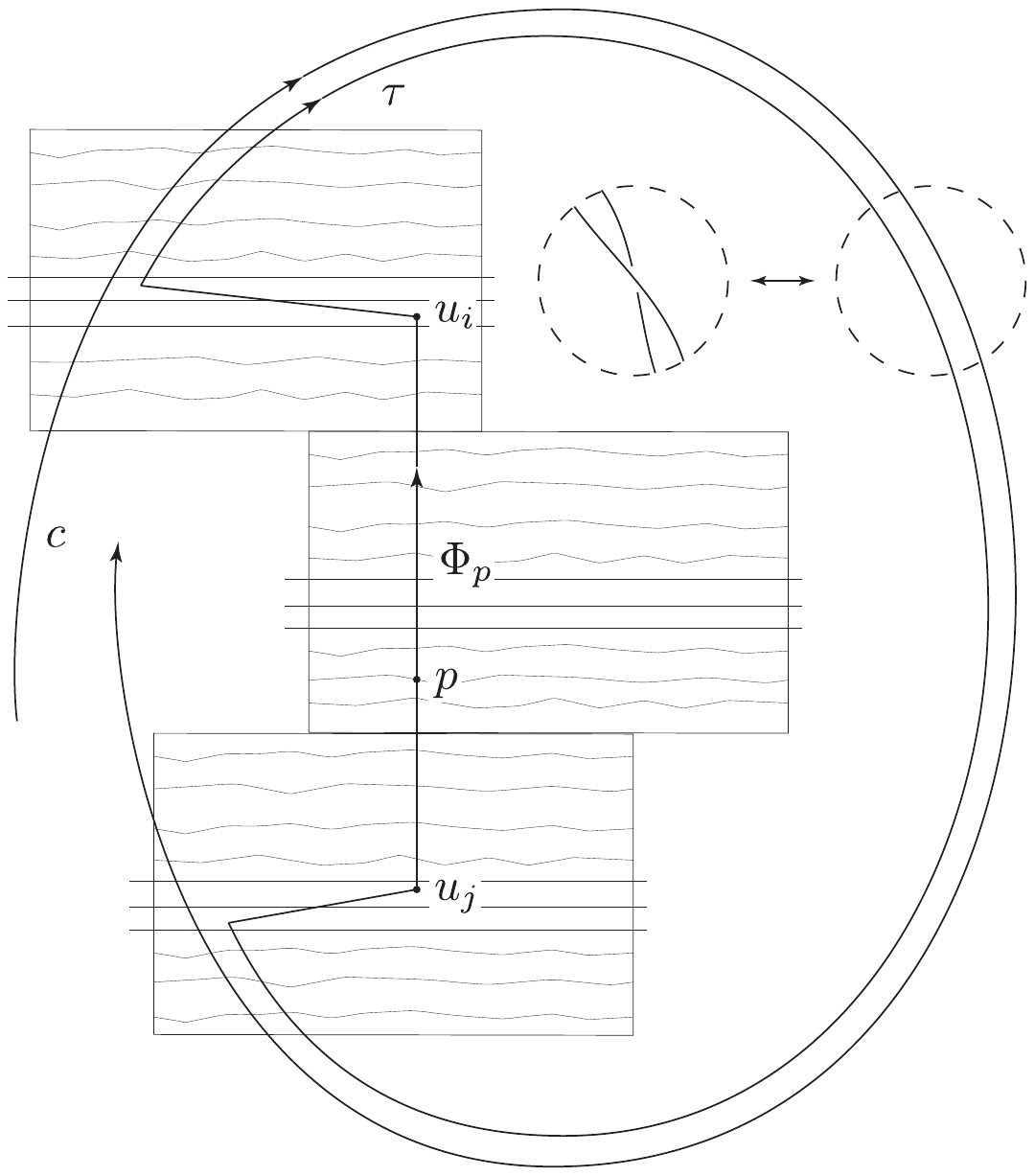} 
\caption{}
\label{transversals}
\end{figure}

\begin{proof} By Theorem~4.1 and Proposition~3.10 of \cite{KR4} there is a $\mathcal B$ compatible isotopy taking $\mathcal F$ to $\mathcal G$, where $\mathcal G$ is $C^{\infty,0}$ and smooth on a set $U$ that contains a plaque neighborhood of every flow box of $\mathcal B$.  By Lemma~\ref{top everywhere taut}, there is a closed topological transversal $\gamma_i$ intersecting $U_i$, for each component of $U_i$ of $U$.  Using the method of Lemma~\ref{join}, construct a connected topological transversal $\gamma$ from the $\gamma_i$ so that $\gamma$ intersects every component of $U$.

Now replace $\gamma$ with a smooth closed transversal $c$ by performing a small isotopy that preserves $\mathcal B$ and smoothness of the foliation on $U$.  To do this, first choose small flow boxes containing $\gamma$ and subordinate to $\mathcal B$, and then apply the smoothing operation of Lemma~\ref{top approx by smooth}.

Let $F$ be a flow box and choose $p\in F$.  As shown in Figure~\ref{transversals}, consider the segment $\Phi_p$ of $\Phi$ that contains $p$, flows up and down, crosses both horizontal boundary components of $F$, and ends flowing up at $u_i \in U_i$ and flowing down at $u_j \in U_j$.  Let $\tau$ be a small push off of a segment of $c$ that starts in $U_i$ flowing up and ends in $U_j$.  Smoothly joining $\tau$ and $\Phi_p$ through $U_i \cup U_j$ produces a smooth transversal $c_p$ that passes through every plaque of $F$.  

The original transversal $c$ can be smoothly connected with $c_p$ using a half twist.  Repeat this construction for every flow box of $\mathcal B$.
\end{proof}

\begin{cor}\label{top approx} Given a topologically taut $C^{1,0}$ foliation $\mathcal F$, there is a $C^0$ small isotopy of $M$ taking $\mathcal F$ to a $C^0$ close, everywhere taut, $C^{\infty, 0}$ foliation.
\end{cor}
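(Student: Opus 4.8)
The plan is to reduce Corollary~\ref{top approx} to Lemma~\ref{everywhere}, extracting everywhere tautness from the single curve $c$ produced there. First I would fix a flow box decomposition $\mathcal B$ of $M$ adapted to $(\mathcal F,\Phi)$ and apply Lemma~\ref{everywhere}. This yields a $\mathcal B$ compatible, $C^0$ small isotopy carrying $\mathcal F$ to a $C^0$ close $C^{\infty,0}$ foliation $\mathcal G$, together with a connected closed transversely smooth curve $c$ meeting every plaque of every flow box of $\mathcal B$, and a set $U$ containing a plaque neighborhood of each flow box on which $\mathcal G$ is smooth. The isotopy and closeness statements demanded by the corollary are precisely those furnished by Lemma~\ref{everywhere}, so all that remains is to verify that $\mathcal G$ is everywhere taut.

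To that end, I would produce a simple closed transversal through an arbitrary point $p\in M$. Choose a flow box $F$ of $\mathcal B$ containing $p$, and let $\Phi_p$ be the segment of $\Phi$ through $p$ that flows up and down until it crosses both horizontal boundary components of $F$, terminating at points $u_i\in U_i$ and $u_j\in U_j$ of the smooth set $U$. Since $c$ meets every plaque of $F$, it in particular meets the plaques of $U$ reached by $\Phi_p$; using smoothness of $\mathcal G$ on $U$ I can take a small push off $\tau$ of a subarc of $c$ running from $U_i$ up to $U_j$ and smoothly join $\tau$ to $\Phi_p$ across $U_i\cup U_j$, exactly as in the final paragraphs of the proof of Lemma~\ref{everywhere}. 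This produces a transversely smooth closed curve through $p$, and a half twist as in Lemma~\ref{join} makes it simple.

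Because $p$ was arbitrary, every point of $M$ lies on a simple closed transversal to $\mathcal G$, which is the definition of everywhere taut. The main technical point is the smooth joining in the second step: the junctions between the flow segment $\Phi_p$, the push off $\tau$, and the curve $c$ must be genuinely transversely smooth, and the resulting curve embedded. Both are handled by working inside the smooth region $U$, where the gluing techniques of Lemma~\ref{smoothly connect} and Theorem~\ref{top approx by smooth} apply verbatim, and by the half twist construction, so no new obstruction arises beyond what Lemma~\ref{everywhere} has already resolved. The real content of the corollary is thus the passage from ``a curve meeting every plaque'' to ``a transversal through every point,'' which the flow $\Phi$ provides once smoothness on $U$ is in hand.
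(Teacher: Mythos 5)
Your proposal is correct and is essentially the paper's own argument: the paper's proof is the one-line observation that the point $p$ in the construction of $c_p$ within the proof of Lemma~\ref{everywhere} was arbitrary, and you have simply spelled out that construction explicitly for a given $p$. No new idea is needed beyond what Lemma~\ref{everywhere} already provides, exactly as you note.
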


\begin{proof} Since there was no restriction on the choice of $p$ in the proof of Lemma~\ref{everywhere}, this follows immediately.
\end{proof}

The next corollary explores the limiting interaction between foliations with and without Reeb components.

\begin{cor}\label{limits of Reeb} A $C^{0}$ Reebless foliation is isotopic to a $C^0$ limit of $C^{\infty,0}$ foliations with Reeb components.  \qed
\end{cor}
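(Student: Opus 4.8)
The plan is to realize the desired Reeb components one at a time inside a tubular neighborhood of a single closed transversal, exploiting that the phantom Reeb component $\mathcal R$ of Example~\ref{phantomReeb} is simultaneously (i) isotopic to the trivial product foliation of the solid torus and (ii) a $C^0$ limit of genuine Reeb-containing foliations. Concretely, I would fix a $C^{\infty,0}$ model of $\mathcal F$ together with a smooth closed transversal $c$, identify a neighborhood $N(c)\cong D^2\times S^1$ with its product (meridian-disk) foliation, splice $\mathcal R$ into $N(c)$, and then use the local model of Figure~\ref{tangent torus} to approximate the result by Reeb-containing foliations. Since inserting $\mathcal R$ will not change the isotopy class of $\mathcal F$, this suffices.

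First I would arrange the smooth closed transversal. If $\mathcal F$ is topologically taut this is immediate: Theorem~\ref{top approx by smooth} produces, after a $C^0$-small isotopy, a $C^0$-close smoothly taut $C^{\infty,0}$ foliation, and any smooth closed transversal of it serves as $c$. In general a Reebless foliation still carries a closed transversal --- if some leaf is noncompact this follows from Lemma~\ref{etautlemma} after closing up, while if every leaf is compact the foliation is a bundle over $S^1$ and a monodromy orbit gives one; alternatively one invokes Proposition~\ref{suegoodman} to reduce to the taut part of $M$. Having $c$, I identify $N(c)$ with $D^2\times S^1$ foliated by meridian disks and replace this foliation by $\mathcal R$, exactly as in Example~\ref{phantomReebinsert}, obtaining a $C^{\infty,0}$ foliation $\mathcal H$.

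The key structural point is that $\mathcal H$ is isotopic to $\mathcal F$. Writing the defining arc of $\mathcal R$ as a graph $\phi=g(r)$ over the meridian diameter, with $g$ supported away from $\partial N(c)$ and having only the vertical tangencies at $r=\pm\tfrac{1}{2}$, the revolved shear $(r,\phi)\mapsto(r,\phi+g(r))$ is a homeomorphism of $D^2\times S^1$, fixed near the boundary and isotopic to the identity through $(r,\phi)\mapsto(r,\phi+tg(r))$, carrying the meridian-disk foliation to $\mathcal R$ (compare the shear in Proposition~\ref{phantomtori}). Hence $\mathcal R$ agrees with the product foliation near $\partial N(c)$ and is ambiently isotopic to it rel boundary, so $\mathcal H\cong\mathcal F$.

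Finally I would produce the approximating sequence. By Example~\ref{phantomReeb} (the second foliation of Figure~\ref{tangent torus}, with its product-by-tori collar) there are $C^{\infty,0}$ foliations $\mathcal R_n$ of $D^2\times S^1$, each containing a genuine Reeb component, agreeing with $\mathcal R$ outside a shrinking neighborhood of the phantom torus $r=\tfrac{1}{2}$, and with $\mathcal R_n\to\mathcal R$ in $C^0$; the collar guarantees the splice with the rest of $\mathcal H$ is smooth and compatible. Replacing $\mathcal R$ by $\mathcal R_n$ inside $N(c)$ yields $C^{\infty,0}$ foliations $\mathcal H_n$ with Reeb components and $\mathcal H_n\to\mathcal H$ in $C^0$, so $\mathcal F\cong\mathcal H=\lim_n\mathcal H_n$. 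The main obstacle is precisely this $C^0$ convergence $\mathcal R_n\to\mathcal R$: one must control the tangent plane field in the thin transition region where $\mathcal R_n$ spirals into its new torus leaf, checking that there --- as at the phantom tangency of $\mathcal R$ --- the tangent planes stay uniformly close to the torus direction, so that confining the modification to an ever-smaller neighborhood of $r=\tfrac{1}{2}$ forces the $C^0$ distance to zero.
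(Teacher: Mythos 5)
Your proposal is correct and follows essentially the same route as the paper: isotope to a $C^{\infty,0}$ foliation equipped with a smooth closed transversal $c$ (the method of Theorem~\ref{top approx by smooth} applied to a single transversal), splice the phantom Reeb component of Example~\ref{phantomReebinsert} into $N(c)$, and approximate it by the genuine Reeb-component foliation of Figure~\ref{tangent torus}. The only differences are cosmetic: the paper obtains the initial closed topological transversal simply by closing up a recurrent arc of a transverse flow inside a flow box (so your case analysis on compact versus noncompact leaves is unnecessary), and it leaves implicit the shear-isotopy and $C^0$-convergence details that you spell out.
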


\begin{proof}
Let $\mathcal F$ be a $C^0$ Reebless foliation, and let $\gamma$ be a closed topological transversal to $\mathcal F$.  To see that such a curve exists, pick a flow transverse to $\mathcal F$, and consider an arc of the flow that starts and ends in a single flow box.  The method of Lemma~\ref{join} can be used to replace the arc with a topological transversal.

Theorem~\ref{top approx by smooth} is stated for taut foliations, but the method of proof allows a single topological transversal, $\gamma$, to be isotoped to a transversal $c$.  Thus applying Theorem~\ref{top approx by smooth} produces an isotopic $C^{\infty,0}$ foliation and a transversal $c$.

The method of Example~\ref{phantomReebinsert} produces an isotopic $C^{\infty,0}$ foliation $\mathcal H$ that is the $C^0$ limit of foliations with Reeb components.
\end{proof}

\section{Dominating 2-forms and volume preserving flows}

For completeness we include a $C^{1,0}$ version of Sullivan's theorem \cite{Sullivan2}.  

\begin{thm}\label{vp flow} Let $\mathcal G$ be an everywhere taut, transversely oriented, $C^{1,0}$ foliation.  There exists a smooth, closed 2-form $\omega$ on $M$ such that $\omega$ is positive on $T\mathcal G$.  Moreover, fixing any Riemannian metric on $M$, there is a smooth volume preserving flow $\Phi$ transverse to $\mathcal G$.
\end{thm}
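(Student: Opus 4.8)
The plan is to prove this by establishing a duality between the existence of a positive closed $2$-form and the nonexistence of a certain kind of "boundary" cycle tangent to $\mathcal{G}$, and then to extract the volume preserving flow from the $2$-form together with the metric. This follows Sullivan's original strategy (\cite{Sullivan2}), but the entire argument must be carried out in the $C^{1,0}$ setting, where the crucial input is that everywhere tautness, unlike the weaker notions, genuinely provides a \emph{smooth} transversal through every point.

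First I would set up the space of \emph{foliation currents}: structure currents (or \emph{foliation cycles}) built from the leaves of $\mathcal{G}$, sitting inside the space of $2$-currents, and the cone generated by them. Since $T\mathcal{G}$ is only continuous, I must take care that the relevant cone of currents and the Dirac-type currents supported on tangent $2$-planes are well defined; because $\mathcal{G}$ is $C^{1,0}$ the plane field $T\mathcal{G}$ exists and is continuous, which is exactly what is needed for these currents to be meaningful. The key separation result is a Hahn--Banach argument: a smooth closed $2$-form $\omega$ positive on $T\mathcal{G}$ exists if and only if there is \emph{no} nonzero foliation cycle that is a boundary (i.e.\ no nonzero tangential cycle homologous to zero). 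This is the conceptual heart of Sullivan's theorem and reduces the analytic problem to a geometric one about transversals.

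Next I would use everywhere tautness to rule out such a boundary foliation cycle. The mechanism is that a smooth closed transversal through a point $p$ on the support of a would-be tangential boundary cycle gives, after pairing with the current, a contradiction with the cycle being a boundary --- the transversal detects a nontrivial intersection pairing that cannot vanish. Here the hypothesis that $\mathcal{G}$ be \emph{everywhere} taut (rather than merely smoothly or topologically taut) is essential: by the examples in Section~\ref{Examples} (e.g.\ Proposition~\ref{no vp flow}) a phantom torus blocks any smooth transversal through its points, so a weaker tautness hypothesis would permit exactly the separating tangential cycle that obstructs $\omega$. Thus everywhere tautness is precisely the condition that kills the obstruction, and I would invoke the smooth transversal through each point to show that the cone of tangential cycles meets the boundaries only at $0$.

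Finally, given the smooth closed $2$-form $\omega$ positive on $T\mathcal{G}$ and a fixed Riemannian metric, I would produce the flow. Fix a smooth unit vector field $N$ normal to $\mathcal{G}$ (available since $\mathcal{G}$ is transversely oriented and $C^{1,0}$, after a small smoothing if necessary), and define a transverse vector field $X$ by rescaling $N$ so that the flux of the metric volume form through the leaves matches $\omega$; concretely, contract the metric volume form with $X$ and arrange that this $2$-form equals $\omega$ on $T\mathcal{G}$, so that $X$ is divergence-free exactly because $d\omega = 0$. Since $\omega$ is positive on $T\mathcal{G}$, the field $X$ is genuinely transverse to the foliation, and its divergence-free property with respect to the metric volume form means the generated flow $\Phi$ is volume preserving. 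The main obstacle I anticipate is the first step: making the current/Hahn--Banach machinery rigorous when $T\mathcal{G}$ is only continuous, since the classical theory presupposes $C^2$ leaves for regularity of the structure currents, and one must verify that continuity of the tangent plane field suffices to define the tangential cone and to run the separation argument without the extra smoothness Sullivan assumed.
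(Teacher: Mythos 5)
Your overall architecture follows Sullivan's original Hahn--Banach route, whereas the paper's proof is a short direct construction, and the difference matters in the $C^{1,0}$ setting. The paper builds $\omega$ explicitly: take a bump $2$-form $\beta$ on the disk, vanishing near $\partial D$ and positive on $TD$ elsewhere, pull it back to $S^1\times D^2$, and transplant it into a small solid torus neighborhood $N(\gamma)$ of a closed transversal $\gamma$ foliated by disks of $\mathcal G$; the resulting closed form is nonnegative on $T\mathcal G$ everywhere and positive near $\gamma$. Everywhere tautness plus compactness yields finitely many transversals whose tube forms sum to the desired $\omega$. These are exactly the ``Poincar\'e dual'' forms you would need anyway to execute your separation step (pairing a transversal against a putative tangential boundary cycle), so the currents/Hahn--Banach layer is a detour: once you can build such a form near every point, you can skip the duality and just add them up. This also dissolves the obstacle you correctly flag, namely justifying the compactness and closedness properties of the cone of foliation currents when $T\mathcal G$ is merely continuous. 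Your reading of where everywhere tautness enters is right --- the phantom tori of Section~\ref{Examples} and Proposition~\ref{no vp flow} are precisely the separating tangential obstructions that smooth tautness cannot exclude.

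One concrete error: the flow cannot in general be obtained by rescaling the metric normal $N$ of $\mathcal G$. The form $X\lrcorner\Omega$ has kernel spanned by $X$, while $\omega$ has its own kernel line, which is transverse to $T\mathcal G$ but generally not the metric normal; if you only arrange that $X\lrcorner\Omega$ agrees with $\omega$ on $T\mathcal G$, the form $X\lrcorner\Omega$ need not be closed and the divergence-free conclusion fails. You must instead define $X$ by the pointwise identity $X\lrcorner\Omega=\omega$ of $2$-forms, which determines $X$ uniquely (pointing along $\ker\omega$, hence transverse to $\mathcal G$ since $\omega>0$ on $T\mathcal G$); only then does Cartan's formula give $\mathcal{L}_X\Omega=d(X\lrcorner\Omega)=d\omega=0$.
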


\begin{proof} Let $\beta$ be a smooth 2-form on a disk $D$ such that $\beta$ is $0$ near $\partial D$ and is otherwise positive on $TD$.  Let $\pi:S^1 \times D^2 \to D^2$ be projection so that $\pi^*\beta$ is a closed form on $S^1 \times D^2$.

Given a transversal $\gamma$ to $\mathcal G$, let $N(\gamma)$ be a small solid torus neighborhood foliated by disks of $\mathcal G|N(\gamma)$.  Choose a diffeomorphism $h:N(\gamma) \to S^1 \times D^2$ that maps leaves of $\mathcal G|N(\gamma)$ to disks that are transverse to the first coordinate.  Then $h^*\pi^*\beta$ is positive on $T\mathcal G$ in a neighborhood of $\gamma$ and non-negative at all points of $N(\gamma)$.

Since $M$ is compact and $\mathcal G$ is everywhere taut, there is a finite collection of smooth transversals $\gamma_i$ such that at every point of $M$, at least one of $h_i^*\pi^*\beta$ is positive on $T\mathcal G$.  If follows that $\omega = \Sigma_i h_i^*\pi^*\beta$ has the desired properties.

Let $\Omega$ be a volume form on $M$.  The equation $\omega = X \lrcorner \Omega$ uniquely determines a vector field $X$ that is transverse to $\mathcal G$.  Let $\Phi$ be the associated flow for $X$.  By Cartan's formula,
$$\mathcal{L}_X \Omega = X \lrcorner d\Omega + d(X\lrcorner\Omega) = d(X\lrcorner \Omega) = d\omega =0,$$ and it follows that $\Phi$ preserves volume.
\end{proof}

\begin{cor}\label{topdomform}
Let $\mathcal F$ be a topologically taut, transversely oriented, $C^{1,0}$ foliation.  There is a $C^0$ small isotopy of $M$ taking $\mathcal F$ to a $C^0$ close, everywhere taut, $C^{\infty, 0}$ foliation $\mathcal G$ satisfying
\begin{enumerate}
\item there exists a smooth, closed 2-form $\omega$ on $M$ such $\omega$ is positive on $T\mathcal G$, and, 
\item fixing any Riemannian metric on $M$, there is a smooth volume preserving flow $\Phi$ transverse to $\mathcal G$.
\end{enumerate}
\end{cor}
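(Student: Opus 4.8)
The plan is to combine Corollary~\ref{top approx}, which upgrades any topologically taut $C^{1,0}$ foliation to a $C^0$ close, isotopic, everywhere taut $C^{\infty,0}$ foliation, with Theorem~\ref{vp flow}, which produces both a dominating closed 2-form and a volume preserving transverse flow for any everywhere taut, transversely oriented $C^{1,0}$ foliation. The corollary we are asked to prove is essentially the concatenation of these two results, so the work is almost entirely bookkeeping: I must check that the hypotheses of Theorem~\ref{vp flow} are met by the foliation $\mathcal G$ produced by Corollary~\ref{top approx}.

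First I would apply Corollary~\ref{top approx} to $\mathcal F$ to obtain the $C^0$ small isotopy of $M$ carrying $\mathcal F$ to a $C^0$ close, everywhere taut, $C^{\infty,0}$ foliation $\mathcal G$. This immediately supplies the isotopy and the foliation $\mathcal G$ asserted in the statement. Second I would verify that $\mathcal G$ satisfies the hypotheses of Theorem~\ref{vp flow}, namely that it is everywhere taut (immediate from Corollary~\ref{top approx}), transversely oriented, and $C^{1,0}$. Transverse orientability is inherited from $\mathcal F$ because an ambient isotopy carries a transverse orientation to a transverse orientation, and $C^{\infty,0}$ foliations are in particular $C^{1,0}$. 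Third I would invoke Theorem~\ref{vp flow} applied to $\mathcal G$: this directly yields the smooth closed 2-form $\omega$ positive on $T\mathcal G$ of conclusion~(1), and, for any fixed Riemannian metric, the smooth volume preserving flow $\Phi$ transverse to $\mathcal G$ of conclusion~(2).

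The only point requiring a word of care is the propagation of transverse orientability through the isotopy, and this is routine since diffeomorphisms isotopic to the identity preserve orientations of the normal line field to the leaves. I do not anticipate a genuine obstacle here; the corollary is designed precisely to package the two preceding results for topologically taut foliations, replacing the everywhere taut hypothesis of Theorem~\ref{vp flow} with the weaker topologically taut hypothesis at the cost of first performing a $C^0$ small isotopy. Thus the proof is a short deduction chaining Corollary~\ref{top approx} into Theorem~\ref{vp flow}.
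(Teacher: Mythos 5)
Your proof is correct and is exactly the paper's argument: the authors also simply chain Corollary~\ref{top approx} into Theorem~\ref{vp flow}, stating that the result ``follows immediately'' from those two. Your extra remarks on transverse orientability being preserved under the isotopy are fine but not needed beyond what the paper already takes for granted.
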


\begin{proof} 
This follows immediately from Corollary~\ref{top approx} and Theorem~\ref{vp flow}.
\end{proof}

\section{Approximating foliations by contact structures}

In this section we contrast the main result of \cite{bowden} and \cite{KR3}, which gives properties of any  contact   approximation of an everywhere taut foliation with the corresponding result, Theorem~\ref{limit of both}, for topologically taut foliations.

\begin{thm} [Theorem~1.2 of \cite{bowden} and Theorem~1.2, \cite{KR3}, approximation without tautness] \label{approx} Let $M$ be a closed, connected, oriented 3-manifold, and let $\mathcal F$ be a transversely oriented $C^{1,0}$ foliation on $M$.  Then $\mathcal F$ can be $C^0$ approximated by a positive (respectively, negative) contact structure if and only if $\mathcal F$ is not a foliation of $S^1\times S^2$ by spheres.
\end{thm}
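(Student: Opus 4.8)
The plan is to prove the two implications separately, treating the backward (``only if'') direction as the short, classical obstruction and the forward (``if'') direction as the substantive one. For the obstruction, I would show directly that the product foliation of $S^1\times S^2$ by spheres admits no $C^0$ contact approximation of either sign. Any oriented plane field $\xi$ that is $C^0$ close to $T\mathcal F$ is transverse to the $S^1$ factor and homotopic to the sphere tangent field, so it is nearly tangent to each sphere leaf $S=\{\ast\}\times S^2$. The key point is that each sphere is a closed, simply connected leaf with trivial germinal holonomy bounding product regions $I\times S^2$ on either side; consequently there is neither preexisting holonomy nor an essential curve in a leaf along which holonomy could be created, so the transverse twisting that distinguishes a contact structure from a foliation cannot be seeded anywhere. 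I would make this precise by adapting the confoliation rigidity of Eliashberg--Thurston at such a leaf, contradicting the contact condition for any $\xi$ uniformly close to $T\mathcal F$; since the argument is insensitive to orientation of the twisting, it rules out both positive and negative approximations.

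For sufficiency, the overall strategy follows Eliashberg--Thurston: convert the confoliation $T\mathcal F$ into a genuine contact structure by inserting small, sign-coherent transverse twisting localized near transversals, then globalize. The steps, in order, would be: (i) reduce to a $C^{\infty,0}$ model, applying the smoothing and flow box machinery already developed (Lemma~\ref{create U}, Theorem~\ref{top approx by smooth}, and the flow box decomposition of \cite{KR4}), so that within each flow box the leaves form a family of $C^k$ graphs over a disk and the transverse flow $\Phi$ is smooth; (ii) use the hypothesis that $\mathcal F$ is \emph{not} the sphere foliation to locate a region carrying nontrivial holonomy, or a leaf with an essential closed curve along which holonomy can be created, to seed the twisting; (iii) perform the local contact perturbation flow box by flow box, rotating the plane field along the flow direction by a small angle whose sign is prescribed so that a defining $1$-form $\alpha$ satisfies $\alpha\wedge d\alpha>0$ (respectively $\alpha\wedge d\alpha<0$); and (iv) propagate the local twisting to a global structure within the confoliation framework, so that once the plane field is contact near enough transversals and remains a confoliation elsewhere, a further $C^0$-small perturbation makes it globally contact of the chosen sign while staying $C^0$ close to $T\mathcal F$.

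The main obstacle is step (iii) in the $C^{1,0}$ category. The classical perturbation is built from derivatives of $T\mathcal F$ in directions transverse to the leaves, but here $T\mathcal F$ is only continuous transversally, so those derivatives need not exist; this is exactly the difficulty that forces the refinement carried out in \cite{KR3} and, independently, in \cite{bowden}. The resolution is to perform the twisting using only tangential smoothness: within a flow box the leaves are $C^k$ graphs, so one adds the rotation along the smooth flow $\Phi$ in a way that is $C^k$ along leaves yet merely continuous in the transverse parameter, after which the contact inequality can be checked pointwise, plaque by plaque, rather than by differentiating across leaves. A secondary obstacle is ensuring that the propagation and gluing in step (iv) do not cancel the inserted twisting where flow boxes meet; this requires the boundary compatibility conditions of the flow box decomposition of \cite{KR4} together with a globally consistent choice of sign for the rotation.
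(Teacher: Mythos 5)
First, a framing remark: the paper does not prove Theorem~\ref{approx} at all; it is imported verbatim from \cite{bowden} and \cite{KR3}, so there is no in-paper proof to compare against. Judged on its own terms, your proposal has two genuine gaps. The more serious one is the ``only if'' direction. You argue that for the sphere foliation of $S^1\times S^2$ ``the transverse twisting \ldots cannot be seeded anywhere,'' but this only shows that the Eliashberg--Thurston \emph{construction} has no place to start; it does not show that no contact structure is $C^0$ close to $T\mathcal F$, which is what must be proved. The standard argument is entirely different: the sphere foliation is everywhere taut (the $S^1$ fibers are closed transversals through every point), so by the mechanism of Theorem~\ref{vp flow} it carries a closed dominating $2$-form, and hence any contact structure sufficiently $C^0$ close to it is weakly symplectically fillable and therefore tight by Gromov--Eliashberg. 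On the other hand, any plane field $C^0$ close to $T\mathcal F$ is homotopic to it and so has Euler class evaluating to $\pm\chi(S^2)=\pm 2$ on a sphere leaf, whereas Eliashberg's inequality forces $\langle e(\xi),[S^2]\rangle=0$ for every embedded sphere in a tight contact manifold. That contradiction, not the absence of holonomy, is what kills both signs of approximation.

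In the ``if'' direction your outline reproduces the Eliashberg--Thurston skeleton, but step~(ii) --- ``locate a region carrying nontrivial holonomy, or a leaf with an essential closed curve along which holonomy can be created'' --- is precisely where the entire difficulty of \cite{bowden} and \cite{KR3} lives, and the proposal does not engage with it. For a minimal foliation without holonomy (e.g.\ a linear foliation of $T^3$, or more generally the foliations classified via Tischler's theorem) there is no such region, and handling this case requires either an explicit perturbation of an almost-fibration or a genuine creation-of-holonomy argument; in the merely $C^{1,0}$ setting even the ``propagation'' of a local contact zone along a leaf is delicate because one cannot integrate $T\mathcal F$ uniquely. Relatedly, your step~(iii) claim that the contact inequality can be ``checked pointwise, plaque by plaque'' is not coherent as stated: $\alpha\wedge d\alpha>0$ intrinsically involves a transverse derivative of the plane field, so the resolution in \cite{KR3} is not to verify the inequality for a rotated copy of $T\mathcal F$ but to build a new, transversely smooth plane field inside flow boxes (using the smoothing results quoted as Lemma~\ref{create U} and the decompositions of \cite{KR4}) whose $C^0$ distance to $T\mathcal F$ is controlled by the continuity of $T\mathcal F$. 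As written, the proposal is a correct description of the shape of the known proof but omits the two points --- non-approximability of the sphere foliation, and foliations without holonomy --- that carry the actual content.
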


\begin{thm}[\cite{bowden,KR2, KR3}]\label{everwheretautimplies} Let $\mathcal G$ be an everywhere taut, $C^{1,0}$ foliation on a manifold other than $S^2 \times S^1$ and let $\Phi$ be a transverse volume preserving flow.  Then $\mathcal G$ can be $C^0$ approximated by a positive (respectively, negative) contact structure.  Moreover, any contact structure that is transverse to $\Phi$ is weakly symplectically fillable and universally tight.
\end{thm}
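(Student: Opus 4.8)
The plan is to separate the approximation assertion from the ``moreover'' clause, and to reduce everything to the existence of the closed dominating $2$-form supplied by Theorem~\ref{vp flow}. The approximation statement is immediate from Theorem~\ref{approx}: since $\mathcal G$ is a transversely oriented $C^{1,0}$ foliation on a manifold $M\neq S^2\times S^1$, it is not the foliation of $S^2\times S^1$ by spheres, so it is $C^0$-approximated by positive and by negative contact structures. Because $\Phi$ is transverse to $T\mathcal G$ (Theorem~\ref{vp flow}) and transversality of the flow direction to a plane field is an open condition in the $C^0$ topology on plane fields, any sufficiently close approximation is itself transverse to $\Phi$. Hence it suffices to establish weak fillability and universal tightness for an arbitrary contact structure $\xi$ transverse to $\Phi$.

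For fillability, I would use the closed $2$-form built in the proof of Theorem~\ref{vp flow}. Fixing a volume form $\Omega_M$ with $\mathcal L_X\Omega_M=0$, where $X$ generates $\Phi$, the form $\omega=X\lrcorner\Omega_M$ is closed and satisfies $\omega(v,w)=\Omega_M(X,v,w)>0$ for every positively oriented basis $(v,w)$ of a plane transverse to $X$; in particular $\omega$ is positive on $\xi$. Choosing a contact form $\alpha$ with $\ker\alpha=\xi$ and $\alpha\wedge d\alpha>0$, I would put $\Omega=\omega+d(t\,\alpha)$ on the collar $M\times[0,1]$. A short computation gives
\[
\Omega\wedge\Omega=2\,dt\wedge\bigl(\omega\wedge\alpha+t\,\alpha\wedge d\alpha\bigr),
\]
which is a positive volume form for all $t\ge 0$ because $\omega\wedge\alpha>0$ (as $\omega$ is positive on $\ker\alpha$) and $\alpha\wedge d\alpha>0$; moreover $\Omega$ is closed and restricts on $\xi$ to $\omega|_\xi+t\,d\alpha|_\xi>0$, so the outer boundary is weakly dominated. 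The main obstacle is to promote this collar model to an honest compact weak filling with the single boundary $(M,\xi)$: the slab has a second, ``soft'' boundary that must be capped off, and this is exactly the Eliashberg--Thurston symplectic-filling construction as carried out for $C^{1,0}$ foliations in \cite{bowden} and \cite{KR3}, which I would invoke rather than reprove. The negative case is symmetric, filling the oppositely oriented manifold.

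Granting the weak filling, tightness of $\xi$ follows from the Gromov--Eliashberg theorem that a weakly fillable contact structure bounds no overtwisted disk. For universal tightness I would pull the whole package back to the universal cover $\widetilde M$: the foliation $\mathcal G$, the flow $\Phi$, the invariant volume form, and hence $\omega$ all lift, $\widetilde\xi$ remains transverse to $\widetilde\Phi$, and $\widetilde\omega$ is closed and positive on $\widetilde\xi$. Since an overtwisted disk is compact, its presence would be visible against the pulled-back symplectic data on a compact piece of $\widetilde M$; the delicate point is that $\widetilde M$ is noncompact, so one cannot simply cite compact weak fillability but must argue directly that no compact overtwisted disk can exist, as in \cite{KR2} and \cite{KR3}.
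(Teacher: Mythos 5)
The paper offers no proof of this theorem beyond the attribution to \cite{bowden}, \cite{KR2}, and \cite{KR3}, and your proposal ultimately defers the two genuinely hard steps (capping the collar off to an honest compact weak filling, and tightness of the pullback to the noncompact universal cover) to those same references, so the two treatments coincide in substance. The outline you do supply --- transversality to $\Phi$ as a $C^0$-open condition, the closed form $\omega = X\lrcorner\Omega_M$ being positive on every plane transverse to $X$, and the collar computation $\Omega\wedge\Omega = 2\,dt\wedge(\omega\wedge\alpha + t\,\alpha\wedge d\alpha)$ --- is the correct Eliashberg--Thurston argument and matches what the cited papers actually prove.
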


\begin{cor} \label{topologicallytautimplies}
Let $\mathcal F$ be a topologically taut, $C^{1,0}$ foliation on a manifold other than $S^2 \times S^1$.  Then $\mathcal F$ can be approximated by a a pair of contact structures $\xi_{\pm}$, $\xi_+$ positive and $\xi_-$ negative, such that $(M,\xi_+)$ and $(-M,\xi_-)$ are weakly symplectically fillable and universally tight.
\end{cor}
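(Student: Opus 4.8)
The plan is to deduce this immediately by chaining the approximation result Corollary~\ref{top approx} with the contact-geometric input Theorem~\ref{everwheretautimplies}, so that the work done on everywhere taut foliations is inherited by topologically taut ones up to a small isotopy.

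First I would fix $\epsilon > 0$ and apply Corollary~\ref{top approx} to the topologically taut $C^{1,0}$ foliation $\mathcal F$ (taken to be transversely oriented, as throughout this section), producing a $C^0$ small isotopy of $M$ carrying $\mathcal F$ to an everywhere taut $C^{\infty,0}$ foliation $\mathcal G$ whose tangent plane field is within $\epsilon/2$ of $T\mathcal F$ in the $C^0$ topology. Since a $C^0$ small isotopy preserves transverse orientability and leaves the ambient manifold $M \ne S^2 \times S^1$ unchanged, $\mathcal G$ is again transversely oriented and defined on a manifold other than $S^2 \times S^1$. By Corollary~\ref{topdomform} (equivalently, by Theorem~\ref{vp flow}), after fixing a Riemannian metric there is a smooth volume preserving flow $\Phi$ transverse to $\mathcal G$.

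Second, I would apply Theorem~\ref{everwheretautimplies} to the pair $(\mathcal G, \Phi)$. This yields a positive contact structure $\xi_+$ and a negative contact structure $\xi_-$, each $C^0$ within $\epsilon/2$ of $T\mathcal G$ and each transverse to $\Phi$; moreover the theorem guarantees that any contact structure transverse to $\Phi$ is weakly symplectically fillable and universally tight. Reading the positive and negative conclusions separately gives that $(M,\xi_+)$ and $(-M,\xi_-)$ are weakly symplectically fillable and universally tight, which is exactly the asserted property.

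Finally I would assemble the two estimates. Because $C^0$-closeness of plane fields obeys the triangle inequality on the Grassmann bundle, $\xi_\pm$ lies within $\epsilon$ of $T\mathcal F$; letting $\epsilon \to 0$ produces sequences of positive and negative contact structures $C^0$-approximating $\mathcal F$ with the stated fillability and tightness. There is no serious obstacle here, as the corollary is a formal consequence of the two cited results. The only point requiring a word of care is the bookkeeping that lets an approximation of the isotoped foliation $\mathcal G$ count as an approximation of $\mathcal F$ itself; this is handled either by the triangle inequality on $C^0$-distances, using that Corollary~\ref{top approx} makes the isotopy as $C^0$-small as desired, or equivalently by transporting $\xi_\pm$ back along the (diffeomorphism) isotopy and invoking the diffeomorphism-invariance of weak symplectic fillability and universal tightness.
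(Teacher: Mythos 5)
Your proposal is correct and follows essentially the same route as the paper, whose proof is the one-line observation that the corollary follows from Corollary~\ref{topdomform} (itself the combination of Corollary~\ref{top approx} and Theorem~\ref{vp flow} that you invoke) together with Theorem~\ref{everwheretautimplies}. Your additional bookkeeping with the triangle inequality on $C^0$-distances is a reasonable unpacking of what the paper leaves implicit.
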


\begin{proof}
This follows immediately from Corollary~\ref{topdomform} and Theorem~\ref{everwheretautimplies}.
\end{proof}

 When the condition on a foliation is weakened from everywhere taut to smoothly taut, 
 it no longer follows that any positive contact structure sufficiently close to $\mathcal F$ must be weakly symplectically fillable and universally tight.

\begin{thm}\label{limit of both} 
There exist $C^{\infty,0}$ smoothly taut, transversely oriented foliations which can be $C^0$ approximated both by weakly symplectically fillable, universally tight contact structures and by overtwisted contact structures.
\end{thm}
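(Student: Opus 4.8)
The plan is to produce a single foliation $\mathcal H$ of the type constructed in Example~\ref{phantomReebinsert} and to show that its phantom Reeb component is flexible enough to sit $C^0$-close to both a taut and a non-taut foliation, so that contact approximations of the two opposite kinds can both be pushed onto $\mathcal H$ by the triangle inequality.

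First I would fix a smoothly taut, transversely oriented $C^{\infty,0}$ foliation $\mathcal F$ on a closed manifold $M \ne S^2 \times S^1$ (for instance any everywhere taut foliation on a hyperbolic $3$-manifold), together with a smooth closed transversal $c$, and form $\mathcal H$ as in Example~\ref{phantomReebinsert} by replacing $\mathcal F$ on $N(c) \cong D^2 \times S^1$ with the phantom Reeb component $\mathcal R$. By Proposition~\ref{no vp flow}, $\mathcal H$ is $C^{\infty,0}$, transversely oriented, and smoothly taut (though not everywhere taut); in particular it is topologically taut, since every smooth transversal is a topological transversal.

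For the weakly fillable, universally tight approximations I would apply Corollary~\ref{topologicallytautimplies} directly to $\mathcal H$: being topologically taut on a manifold other than $S^2 \times S^1$, it is $C^0$-approximated by a positive contact structure $\xi_+$ and a negative contact structure $\xi_-$ with $(M,\xi_+)$ and $(-M,\xi_-)$ weakly symplectically fillable and universally tight. Strictly, that corollary produces these as approximations of the everywhere taut foliation furnished by Corollary~\ref{top approx}; since the isotopy there is $C^0$-small, the resulting contact structures stay $C^0$-close to $\mathcal H$ as well, by the triangle inequality in the $C^0$ topology on plane fields. For the overtwisted approximations I would exploit the observation recorded in Example~\ref{phantomReeb}: the phantom Reeb component $\mathcal R$ is a $C^0$-limit of foliations $\mathcal R'$ of $D^2\times S^1$ carrying a \emph{genuine} Reeb component, with the phantom torus $T$ promoted to the boundary leaf and a product-by-tori collar. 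Replacing $\mathcal R$ by such an $\mathcal R'$ inside $N(c)$ yields a foliation $\mathcal H'$ with an honest Reeb component and with $T\mathcal H'$ as $C^0$-close to $T\mathcal H$ as we like. A foliation containing a Reeb component is $C^0$-approximated by overtwisted contact structures: the meridian disk of the Reeb solid torus has its boundary on the torus leaf, where the contact planes are forced nearly tangent to the torus, so any sufficiently close contact perturbation makes that disk overtwisted (this is the mechanism underlying the search for pairs of approximating contact structures in \cite{ColinFirmo}). Choosing $\mathcal H'$ within $\varepsilon/2$ of $\mathcal H$ and an overtwisted $\xi$ within $\varepsilon/2$ of $\mathcal H'$ then gives overtwisted contact structures $C^0$-close to $\mathcal H$.

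The main obstacle is the overtwisted half. It requires both the geometric input that a Reeb component forces overtwistedness of every sufficiently close contact structure (via the meridian overtwisted disk) and careful bookkeeping that the \emph{global} contact perturbation can be kept $C^0$-close to $\mathcal H'$ while the overtwisted disk survives, rather than merely being close on $N(c)$. By contrast the tight half is essentially immediate from the tautness of $\mathcal H$ and Corollary~\ref{topologicallytautimplies}; the genuine content of the theorem is that the same $\mathcal H$, precisely because its phantom Reeb component is an inessential (non-taut-obstructing) $C^0$-perturbation of a genuine Reeb component, limits onto overtwisted structures at the same time that its tautness forces it to limit onto fillable, universally tight ones.
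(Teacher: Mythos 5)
The first half of your argument (the construction of $\mathcal H$ from Example~\ref{phantomReebinsert} and the tight, weakly fillable approximation via Corollary~\ref{topologicallytautimplies}) matches the paper and is fine. The gap is in the overtwisted half. Your mechanism is the claim that \emph{any} contact structure sufficiently $C^0$-close to a foliation with a genuine Reeb component is overtwisted, because the meridian disk of the Reeb solid torus becomes an overtwisted disk. This is false. $C^0$-closeness of $\xi$ to the foliation forces $\xi$ to be close to the tangent planes of the boundary torus leaf $T$ along $\partial D$, but an overtwisted disk requires the contact planes (equivalently, the contact framing of a Legendrian realization of $\partial D$) to match the framing induced by $D$, and $T_pT$ and $T_pD$ are transverse planes meeting only in $T_p(\partial D)$. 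What actually controls overtwistedness here is the slope of the characteristic foliation $\xi\cap TT$ on the torus, and $C^0$-closeness of $\xi$ to $TT$ imposes no constraint whatsoever on that slope: only if a closed leaf of the characteristic foliation has meridional slope does it bound a compressing disk with contact framing zero and hence an overtwisted disk. The Reeb foliation of $S^3$ illustrates the failure: it is a $C^0$-limit of tight contact structures, so a Reeb component alone cannot force overtwistedness of all nearby contact structures. Consequently your triangle-inequality step has nothing to feed on --- you must actually \emph{construct} an overtwisted approximation of $\mathcal H_1$, not merely observe that $\mathcal H_1$ has a Reeb component.

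This is precisely what the paper's proof supplies and what is missing from yours. The paper decomposes $M$ as $S\cup (T\times I)\cup C$, inserts on $T\times I$ the contact structures $\ker\bigl(dz+\epsilon(\cos nz\,dx+\sin nz\,dy)\bigr)$ with Giroux torsion greater than $1$, so that the characteristic foliation on the tori $T\times\{z\}$ sweeps through \emph{all} slopes; at the level $z_0$ where the slope is meridional, a closed leaf bounds a compressing disk in $S$ and is an overtwisted disk. The remaining work is to extend this contact structure over $C\cup S$ while staying $C^0$-close to $\mathcal H_1$, which uses the strict domination condition along $\partial(T\times I)$ and the techniques of \cite{KR2}; this step is also why the paper starts with $\mathcal F$ everywhere taut and chooses the transversal $c$ to meet \emph{every} leaf, a hypothesis you dropped when you took $\mathcal F$ merely smoothly taut and $c$ an arbitrary closed transversal. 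You would need to restore that hypothesis and carry out (or cite) this insertion-and-extension argument to complete the overtwisted half.
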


\begin{proof} Let $\mathcal F$ be any transversely oriented, everywhere taut $C^{\infty,0}$ foliation, and let $c$ be a transversal to $\mathcal F$ that has nonempty intersection with every leaf of $\mathcal F$.  As described in Example~\ref{phantomReebinsert}, replace a regular neighborhood of $c$ foliated by disks by a phantom Reeb component to create a new $C^{\infty,0}$ foliation $\mathcal H$.  The foliation $\mathcal H$ is smoothly taut, but not everywhere taut.

By Corollary~\ref{topologicallytautimplies}, $\mathcal H$ can be approximated by a weakly symplectically fillable, universally tight contact structure.

The construction of $\mathcal H$ involved the creation of a phantom Reeb component near a smooth closed transversal.  Replacing a neighborhood of this phantom Reeb component with the $C^0$ close Reeb foliation described in Example~\ref{phantomReeb} gives a $C^0$ close $C^{\infty,0}$ foliation $\mathcal H_1$.  Thus $M$ can be written as the union of three codimension 0 pieces: $S$, a solid torus with the Reeb foliation, $T \times I$, the product of a torus and interval foliated as smooth product, and $C$, the closure of the complement of $(T \times I) \cup S$, foliated by the restriction of $\mathcal H_1$.

Set $I=[a,b]$, and consider the contact structures $\xi_{\epsilon,n}= \mbox{ker } \alpha_{\epsilon,n}$ on $T\times I$ given by 
$$\alpha_{\epsilon,n} = dz+\epsilon (cos \,nz \,dx + sin\, nz \,dy).$$
Such contact structures have constant slope characteristic foliations on each $T \times \{z\}$.  By constraining $\epsilon>0, n, a,$ and $b$ appropriately, we obtain a contact structure $\xi=\xi_{\epsilon,n}$ on $T \times I$ such that

\begin{enumerate}

\item $\xi$ has Giroux torsion greater than 1, 

\item $\xi$ strictly dominates $\mathcal F$ along $\partial(T \times I)$, and

\item $\xi$ is $C^0$ close to the product foliation by vertical tori.

\end{enumerate}

The first condition means that all slopes occur, at least once, as $z$ moves across $I$.  The second condition is that the slopes at $a$ and $b$ must be close to, and strictly dominate, in the sense of Definition~5.3 of \cite{KR2}, i.e., the slope of $\mathcal F \cap \partial(T \times I)$ when viewed from outside of $C \cup S$.  The third condition follows by choosing $\epsilon>0$ sufficiently small (see Proposition 2.3.1 \cite{ET}).

The next step is to extend $\xi$ over all of $S \cup C$.  Let $J=[a - \delta, b + \delta]$ and regard $\xi$ as defined on $T \times J$, a neighborhood of $T \times I$ in $M$ for which $\mathcal H_1 \cap \partial (T \times J) = \mathcal F \cap \partial (T \times J)$.  If $\delta$ is small enough, $\xi$ will dominate $\mathcal F \cap \partial(T \times J)$, or equivalently, $\xi$ dominates $\mathcal H_1 \cap \partial (T \times J)$.

Since $c$ has nonempty intersection with every leaf of $\mathcal F$, condition (2) is what is required to apply the techniques of \cite{KR2} to extend $\xi$ from $T \times J$ to the remaining portion $C\cup S$ while continuing to approximate $\mathcal H_1$.

Choose $z_0\in I$ so that the slope of the characteristic foliation of $\xi$ in $T \times\{z_0\}$ matches the slope of a compressing disk.  Then any closed curve of the characteristic foliation on $T\times\{z_0\}$ bounds an overtwisted disk.  
\end{proof}

Since Theorem~{3.4} of \cite{calegari} shows that any $C^0$ foliation can be isotoped to a $C^{\infty,0}$ foliation, Corollary~\ref{topologicallytautimplies} implies the following.

\begin{cor} [Corollary~1.7, \cite{KR3}] \label{L space}
An $L$-space does not admit a transversely orientable, topologically taut, $C^0$ foliation.  \qed
\end{cor}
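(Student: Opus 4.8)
The plan is to argue by contradiction, reducing the $C^0$ hypothesis to the smooth setting where the contact-geometric machinery of the previous sections applies, and then to invoke the Heegaard Floer obstruction of Ozsv\'ath and Szab\'o. So suppose $Y$ is an $L$-space that carries a transversely orientable, topologically taut, $C^0$ foliation $\mathcal F$. First I would record that an $L$-space is by definition a rational homology sphere, so $b_1(Y)=0$ and in particular $Y\neq S^2\times S^1$; this is exactly what lets us apply the approximation results, whose only excluded manifold is $S^2\times S^1$.

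Next I would remove the regularity deficit. By Theorem~3.4 of \cite{calegari}, the foliation $\mathcal F$ is isotopic to a $C^{\infty,0}$ foliation $\mathcal F'$. The key observation is that topological tautness is preserved under ambient isotopy: a homeomorphism of $M$ carries leaves to leaves and carries isotopies-into-leaves to isotopies-into-leaves, so it sends topological transversals to topological transversals and hence sends a complete family of them to a complete family. Thus $\mathcal F'$ is again topologically taut, and since $\mathcal F'$ is $C^{\infty,0}$ it is in particular $C^{1,0}$.

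With a topologically taut, transversely oriented, $C^{1,0}$ foliation $\mathcal F'$ on a manifold other than $S^2\times S^1$ in hand, Corollary~\ref{topologicallytautimplies} applies and produces a positive contact structure $\xi_+$, $C^0$-close to $\mathcal F'$, such that $(Y,\xi_+)$ is weakly symplectically fillable and universally tight. At this point the argument leaves the scope of this paper and appeals to the known Floer-theoretic obstruction: a closed oriented $3$-manifold that carries a taut foliation---equivalently here, one that carries the weakly symplectically fillable contact structure $\xi_+$ produced from it---and that is not $S^2\times S^1$ has nonvanishing reduced Heegaard Floer homology (Ozsv\'ath--Szab\'o, as applied in \cite{KR3}). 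Since a rational homology sphere is an $L$-space precisely when its reduced Heegaard Floer homology vanishes, this contradicts the assumption that $Y$ is an $L$-space, completing the argument.

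The substance of the corollary lies entirely in the earlier theorems, and the one point demanding care is the regularity reduction itself: the classical obstruction is stated for smooth taut foliations and cannot be fed a merely continuous tangent distribution, for which $T\mathcal F$ need not even be integrable in a controlled way. The main obstacle, then, is not in the present deduction but in having both Calegari's smoothing theorem and the $C^{1,0}$ approximation theorem underlying Corollary~\ref{topologicallytautimplies} available; granting those, the proof is the short chain above. I would also note that it is equivalent to route through Corollary~\ref{top approx}, first upgrading $\mathcal F'$ to an isotopic everywhere taut $C^{\infty,0}$ foliation and then applying Theorem~\ref{everwheretautimplies}, since for the Floer obstruction only the weak fillability of the resulting contact structure is used.
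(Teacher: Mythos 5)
Your proposal is correct and follows exactly the route the paper intends: isotope the $C^0$ foliation to a $C^{\infty,0}$ one via Calegari's theorem (noting tautness is preserved by isotopy and that an $L$-space is not $S^2\times S^1$), apply Corollary~\ref{topologicallytautimplies} to obtain a weakly symplectically fillable contact structure, and conclude via the Ozsv\'ath--Szab\'o obstruction. The paper gives only this one-line deduction and defers the details to \cite{KR3}, so your write-up simply fills in the same argument.
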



\begin{thebibliography}{}

%
%

\bibitem{bowden}
J.\ Bowden, \textit{Approximating $C^0$-foliations by contact structures}, (preprint).

%
\bibitem{calegari} D.\ Calegari, \textit{Leafwise smoothing laminations}, Algebr.  Geom.  Topol.  \textbf{1} (2001), 579--585.
%
\bibitem{CC} A.\ Candel and L.\ Conlon, \textit{Foliations I}, A.M.S.  Graduate Studies in Mathematics \textbf{23}, 2000.

\bibitem{CN} 
C.\ Camacho and A.\ Lins Neto, translated by S.\ Goodman, \textit{Geometric theory of foliations}, Birkhauser 1985.

\bibitem{ColinFirmo} V. \ Colin, S. \ Sebast\~ao, \textit{Paires de structures de contact sure les vari\'et\'es de dimension trois}, Algebr. Geom. Topol., \textbf{11} (2011), no.5, 2627--2653.

%
%
%

\bibitem{ET}
Y.\ Eliashberg and W.\ Thurston, \textit{Confoliations}, University Lecture Series \textbf{13}, Amer.\ Math.\ Soc., Providence, 1998.  

\bibitem{G1} D.\ Gabai, \emph{Foliations and the topology of 3-manifolds}, J.  Differential Geometry \textbf{18} (1983), 445--503.

\bibitem{Goodman} S.\ Goodman, \emph{Closed leaves in foliations of codimension one}, Comment.  Math.  Helvetici, \textbf{50} (1975) 383--388.

%
%
%
%
%

\bibitem{Hass} J.\ Hass, \text{Minimal surfaces in foliated manifolds}, Comment.  Math.  Helvetici \textbf{61} (1986), 1--32.



\bibitem{KR2}
W.\ H.\ Kazez and R.\ Roberts, \textit{Approximating $C^{1,0}$-foliations}, (to appear in Geometry \& Topology Monographs).

\bibitem{KR3}
W.\ H.\ Kazez and R.\ Roberts, \textit{$C^0$ Approximations of foliations}, (preprint).

\bibitem{KR4}
W.\ H.\ Kazez and R.\ Roberts, \textit{Flow box decompositions}, (preprint).

%
%
%
%
%
%
%
%
%
%
\bibitem{solodov}
V.\ V.\ Solodov, \textit{Components of topological foliations}, Math.  USSR Sbornik, \textbf{47(2)} (1984), 329--343.

\bibitem{Sullivan1}
D.\ Sullivan, \textit{Cycles for the Dynamical Study of Foliated Manifolds and Complex Manifolds}, Invent.  Math.  \textbf{36} (1976), 225--255.  

\bibitem{Sullivan2}
D.\ Sullivan, \textit{A homological characterization of foliations consisting of minimal surfaces}, Comment.  Math.  Helvetici \textbf{54} (1979), 218--223.


\bibitem{Thurstonthesis}
W.\ P.\ Thurston, \textit{Foliations of 3-manifolds which are circle bundles}, PhD thesis, Princeton, 1972.

%
%
%

\end{thebibliography}
\end{document}